\newcommand{\cmark}{\ding{51}}%
\newtheorem{theorem}{Theorem}
\newtheorem{proposition}{Proposition}
\newenvironment{proof}[1][Proof]{\begin{trivlist}
\item[\hskip \labelsep {\bfseries #1}]}{\end{trivlist}}
\newenvironment{remark}[1][Remark]{\begin{trivlist}
\item[\hskip \labelsep {\bfseries #1}]}{\end{trivlist}}
\DeclareMathOperator*{\argmin}{arg\,min}
\newsavebox\myboxA
\newsavebox\myboxB
\newlength\mylenA
\newcommand*\xoverline[2][0.75]{%
    \sbox{\myboxA}{$\m@th#2$}%
    \setbox\myboxB\null
    \ht\myboxB=\ht\myboxA%
    \dp\myboxB=\dp\myboxA%
    \wd\myboxB=#1\wd\myboxA
    \sbox\myboxB{$\m@th\overline{\copy\myboxB}$}
    \setlength\mylenA{\the\wd\myboxA}
    \addtolength\mylenA{-\the\wd\myboxB}%
    \ifdim\wd\myboxB<\wd\myboxA%
       \rlap{\hskip 0.5\mylenA\usebox\myboxB}{\usebox\myboxA}%
    \else
        \hskip -0.5\mylenA\rlap{\usebox\myboxA}{\hskip 0.5\mylenA\usebox\myboxB}%
    \fi}
\let\oldequation\align
\let\oldendequation\endalign
\renewenvironment{align}
  {\linenomathNonumbers\oldequation}
  {\oldendequation\endlinenomath}
\journal{arXiv}
\newcommand{\TheTitle}{Maximum-principle-satisfying second-order Intrusive Polynomial Moment scheme} 
\date{\today}
\def\lambdabar{\xoverline{\bm{\lambda}}}
\def\lambdabarjn{\lambdabar_j^n}
\begin{document}

\begin{frontmatter}

\title{\TheTitle}

\author[adressJonas]{Jonas Kusch}

\author[adressGraham]{Graham W. Alldredge}

\author[adressMartin]{Martin Frank}

\address[adressJonas]{Karlsruhe Institute of Technology, Karlsruhe,
    jonas.kusch@kit.edu}
\address[adressGraham]{FU Berlin, Berlin, graham.alldredge@fu-berlin.de}
\address[adressMartin]{Karlsruhe Institute of Technology, Karlsruhe,
    martin.frank@kit.edu}

\begin{abstract}
Using standard intrusive techniques when solving hyperbolic conservation laws with uncertainties can lead to oscillatory solutions as well as nonhyperbolic moment systems. The Intrusive Polynomial Moment (IPM) method ensures hyperbolicity of the moment system while restricting oscillatory over- and undershoots to specified bounds. In this contribution, we derive a second-order discretization of the IPM moment system which fulfills the maximum principle. This task is carried out by investigating violations of the specified bounds due to the errors from the numerical optimization required by the scheme. This analysis gives weaker conditions on the entropy that is used, allowing the choice of an entropy which enables choosing the exact minimal and maximal value of the initial condition as bounds. Solutions calculated with the derived scheme are nonoscillatory while fulfilling the maximum principle.
The second-order accuracy of our scheme leads to significantly reduced numerical costs.
\end{abstract}

\begin{keyword}
uncertainty quantification, conservation laws, maximum principle, moment system, hyperbolic, oscillations
\end{keyword}

\end{frontmatter}

\section{Introduction}
Hyperbolic conservation laws play an important role in modeling various physical and engineering problems. Examples include the shallow water equations in hydrology as well as the Euler equations in gas dynamics. Finite-volume schemes, which are perhaps the most popular numerical methods for hyperbolic problems, are initially designed for the scalar hyperbolic conservation law,
\begin{align}\label{eq:origProblem-intro}
\partial_t u(t,x)+\partial_x f(u(t,x)) = 0,
\end{align}
because the solution theory of this problem is well established.
The conservation law \eqref{eq:origProblem-intro} is generally supplemented with initial conditions
\begin{align}
 u(t=0,x) = u_0(x)
\end{align}
as well as boundary conditions, though the latter do not play a role in this work.

We wish to determine the solution $u$ which depends on the spatial variable $x\in\mathcal{D}=\mathbb{R}$ and time $t\in\mathbb{R}^+$. The function $f:\mathbb{R}\to\mathbb{R}$ is the system flux.
Since $u$ can become discontinuous even for smooth initial conditions $u_0$, the solution must be seen in the weak sense. To ensure uniqueness, an entropy condition is imposed to pick the physically meaningful weak solution \cite[Chapter~3.8.1]{levequenumerical}.
An important property of such an entropy solution is the maximum principle (see \cite[Chapter~2.4]{holden2015front}), which states that 
\begin{linenomath}\begin{align*}
\min_{x\in\mathcal{D}}u_0(x)\leq u(t,x)\leq \max_{x\in\mathcal{D}}u_0(x)
\end{align*}\end{linenomath}
for all $t$ and $x$. Finite-volume schemes are carefully constructed to satisfy this property on a discrete level, see for example \cite{bell1988unsplit,colella1990multidimensional,liu1993maximum,zhang2011maximum,guermond2014second}.\\

In many practical applications the model parameters and initial conditions are not deterministic, and classical finite-volume methods do not take this into account.
One popular approach for uncertain partial differential equations is the stochastic-Galerkin (SG) method \cite{ghanem2003stochastic}.
It is based on polynomial chaos \cite{wiener1938homogeneous} and promises pseudo-spectral convergence for smooth data \cite{canuto1982approximation}. The key idea is to parameterize the uncertainty with the help of a random variable $\bm\xi\in\bm\Theta\subseteq \mathbb{R}^P$ and span the solution with the help of orthonormal polynomials $\varphi_i$. In the following, we assume a scalar random variable, i.e. $P=1$ and $\xi\in\Theta\subseteq \mathbb{R}$. The solution is then approximated by
\begin{linenomath}\begin{align*}
u(t,x,\xi) \approx \mathcal{U}_{SG}(\bm{u}(t,x))(\xi) = \sum_{i = 0}^N u_i(t,x)\varphi_i(\xi) .
\end{align*}\end{linenomath}
The stochastic-Galerkin ansatz leads to a coupled deterministic system of equations for the expansion coefficients $\bm{u} = (u_0, \dots , u_N)^T$ (which also correspond to moments of the solution).
Simple applications, such as the steady diffusion equation \cite{xiu2002modeling} or the advection equation \cite{gottlieb2008galerkin} show the expected spectral convergence.
However, the solutions to hyperbolic problems are generally nonsmooth, and thus the SG method converges slowly and exhibits the oscillations of Gibbs phenomenon.
In addition to that, the stochastic-Galerkin solution can violate the maximum principle, leading to unphysical solutions. In the case of systems the SG equations may not be hyperbolic, making it impossible to solve with standard methods \cite{despres2013robust}. \\

The Intrusive Polynomial Moment (IPM) method \cite{poette2009uncertainty,poette2011treatment,despres2013robust} is designed to preserve hyperbolicity and is constructed to bound oscillations.
The IPM approach is to replace the stochastic-Galerkin ansatz with one derived from a minimum-entropy principle.
The IPM ansatz has the form
\begin{linenomath}\begin{align*}
u(t,x,\xi) \approx (s')^{-1}\left(\sum_{i = 0}^N \lambda_i(\bm{u}(t,x))\varphi_i(\xi)\right),
\end{align*}\end{linenomath}
where $(s')^{-1}$ is the inverse function of the derivative of a strictly convex entropy density $s$ and $\lambda_i$ are the expansion coefficients which need to be chosen to match moment constraints.
Unlike the SG method, in an IPM method the expansion coefficients of the ansatz do not correspond to the moments of the ansatz, which above we have collected into the vector $\bm{u}$.
Minimum-entropy methods have been used in kinetic theory, where they are sometimes called M$_N$ methods, see for example \cite{levermore1996moment,dubroca2002half,hauck2010positive,hauck2011high}. 
The IPM method has a few key advantages.
First, for a scalar conservation law such as \eqref{eq:origProblem-intro}, the entropy density can be chosen such that the solution only takes values in $(u_-, u_+)$.
The interval $(u_-, u_+)$ can be chosen by the user to, e.g., enforce a maximum principle for the discrete solution.
These bounds on the solution also restrict the under- and overshoots of oscillations.
Other attractive properties possessed by the IPM system are hyperbolicity and entropy dissipation.
However, these nice properties of the IPM method do come at certain costs and challenges. The main cost is computational, since an optimization problem must be numerically solved in every spatial cell at each time step. In order to reduce these costs, one should take advantage of both the parallelizability of the method \cite{garrett2015optimization} and high-order numerical schemes for the resulting moment equations.

One of the main challenges facing the IPM method is that the design of a high-order numerical scheme is more complicated.
Unlike the SG method, the moments $\bm{u}$ of the numerical solution must stay within a certain set, called the realizable set, to ensure that the IPM ansatz can be reconstructed.

Another challenge with IPM methods is that while they successfully dampen oscillations near the bounds $u_-$ and $u_+$, the solutions can still oscillate heavily between these bounds.

We tackle these two challenges in this paper.
After reviewing the derivation of the IPM method in section \ref{sec:Section2}, we give a na\"ive, out-of-the-box numerical method for the IPM equations in section \ref{sec:Section3} to demonstrate the problem of maintaining realizability.
Next, in section \ref{sec:Realizability}, we begin to address the problem of numerically maintaining realizability with a first-order scheme through either a time-step restriction or modification of the numerical method.
In section \ref{sec:highOrder} we extend these results to a second-order scheme.
In section \ref{sec:fd-entropy}, we discuss properties of the minimum-entropy approximation, and introduce a new entropy which leads to smaller oscillations in the solution.
Section \ref{sec:results} presents numerical results for the uncertain Burgers' and advection equations.
Finally in section \ref{sec:Section8}, we summarize our findings and give an outlook on future work.

\section{Stochastic Galerkin and IPM}
\label{sec:Section2}

In this section, we recall the derivations of the stochastic-Galerkin and Intrusive Polynomial Moment systems. Our derivation is carried out for the scalar hyperbolic equation with uncertain initial condition
\begin{subequations}\label{eq:origProblem}
\begin{align}
\partial_t u(t,x,\omega)+\partial_x f(u(t,x,\omega)) &= 0, \\
u(0,x,\omega) &= u_0(x,\omega).
\end{align}
\end{subequations}
Here, $x\in\mathcal{D} = \mathbb{R}$ is the spatial domain and $t\in\mathbb{R}^+$ is time.
The flux $f$ may also depend directly on $\omega$, but for now we suppress this from the notation for clarity.

The stochastic variable is $\omega\in\Omega$, where $\Omega$ is the set of all possible outcomes of a random experiment.
The probability measure $d\mathcal{P}(\omega)$ with $\int_{\Omega}d\mathcal{P}(\omega)=1$ imposes a weighting of different events $\omega$.
Assuming the solution is a second-order random field, we can make use of the generalized polynomial chaos (gPC) approach, which lets us represent the random solution with the help of a random variable $\xi\in\Theta$, which has the probability distribution function $f_{\Xi}$. In the following we abuse notation by writing $u = u(t,x,\xi(\omega))$, and we drop the dependency on $\omega$. From the theory of scalar, hyperbolic problems, we know that the solution satisfies a maximum principle \cite[Chapter~2.4]{holden2015front}
\begin{linenomath}\begin{align*}
\min_{x \in \mathcal{D}, \xi \in \Theta} u_0(x, \xi) \leq u(t, x, \xi)
 \leq \max_{x \in \mathcal{D}, \xi \in \Theta} u_0(x, \xi).
\end{align*}\end{linenomath}
Furthermore, for a fixed $\xi$, any strictly convex function $s :\mathbb{R} \to \mathbb{R}$ is an entropy to this problem, meaning that there exists an entropy flux $h$ satisfying $h'(u) = s'(u)f'(u)$ such that
\begin{linenomath}\begin{align*}
\partial_t s(u) + \partial_x h(u) = 0
\end{align*}\end{linenomath}
for strong solutions. We wish to determine how the uncertainty of the initial condition propagates through the solution over time. In order to derive methods such as stochastic Galerkin, we multiply \eqref{eq:origProblem} with the basis function $\varphi_i$ and the probability distribution function $f_{\Xi}$ and integrate with respect to $\xi$ over $\Theta$. The basis functions are chosen to be orthonormal with respect to $f_{\Xi}$. To simplify the notation, we introduce the bracket operator
\begin{linenomath}\begin{align*}
\langle g \rangle := \int_{\Theta} g(\xi) f_{\Xi}(\xi)d\xi.
\end{align*}\end{linenomath}
The resulting system is then
\begin{subequations}\label{eq:origSys}
 \begin{align}
\partial_t \langle u(t,x,\cdot)\varphi_i\rangle+\partial_x \langle f(u(t,x,\cdot))\varphi_i\rangle &= 0, \\
\langle u(0,x,\cdot) \varphi_i\rangle &= \langle u_0(x,\cdot)\varphi_i\rangle.
\end{align}
\end{subequations}
Since the basis functions are orthonormal, the moments $\langle u \varphi_i\rangle$ can be interpreted as Fourier coefficients.
Provided the solution is sufficiently smooth, these coefficients fall to zero rapidly for increasing order $i$, so the first moments should suffice for a good approximation.
Furthermore, the lower-order moments give the most familiar quantities such as the mean and variance. This motivates using only the first $N+1$ moments to define a discretization of the true solution $u$,
\begin{linenomath}\begin{align*}
u_i(t,x) \approx \langle u(t,x, \cdot ) \varphi_i \rangle\enskip \text{ for } \enskip i = 0,\cdots,N;
\end{align*}\end{linenomath}
we collect these moments and the basis functions into the vectors $\bm{u} = (u_0,\cdots,u_N)^T$ and $\bm{\varphi} = (\varphi_0,\cdots,\varphi_N)^T$, respectively.
The main problem is to find a good ansatz $\mathcal{U}(\bm{u}) \approx u$, which allows us to write \eqref{eq:origSys} as a closed system of equations for the moments $\bm{u}$. For the stochastic-Galerkin method, the ansatz is given by
\begin{linenomath}\begin{align*}
\mathcal{U}_{SG}(\bm{u}(t, x))(\xi) = \sum_{i = 0}^N u_i(t, x)\varphi_i(\xi)
 = \bm{u}(t, x)^T \bm{\varphi}(\xi).
\end{align*}\end{linenomath}
(Note that in kinetic theory, this corresponds to the well-known P$_N$ closure, see for example \cite{chandrasekhar1943stochastic,case1967linear,lewis1984computational,pomraning1973equations}.)
In the vector notation the resulting stochastic-Galerkin system is written as
\begin{subequations}
 \begin{linenomath}\begin{align*}
\partial_t \bm{u}+\partial_x \langle f(\bm{u}^T\bm{\varphi})\bm{\varphi}\rangle &= 0, \\
\bm{u}(0,x) &= \langle u_0(x,\cdot)\bm{\varphi}\rangle.
\end{align*}\end{linenomath}
\end{subequations}
The SG system is attractive because it is relatively cheap to simulate and $\mathcal{U}_{SG}$ converges pseudo-spectrally to the correct solution $u$ for smooth problems.
However, the main drawback is that it exhibits the Gibbs phenomenon, i.e., the solution oscillates heavily for nonsmooth problems.  Also, for systems of hyperbolic equations, the resulting SG system might no longer be hyperbolic, and thus ill-posed. 
An example of a classical SG solution for a hyperbolic problem can be found in Figure~ \ref{fig:SolutionSG}.
\begin{figure}[h!]
\centering
  \includegraphics[width=0.8\linewidth]{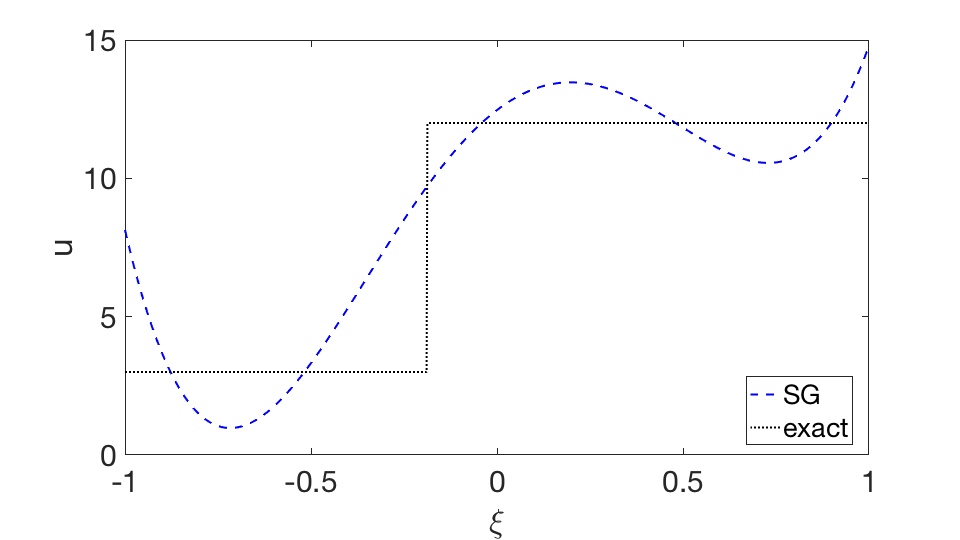}
  \caption{Approximation result for fixed $x$ and $t$ using Burgers' equation (see section \ref{sec:UBurgers}). The SG approximation oscillates heavily and violates the maximum principle.}
  \label{fig:SolutionSG}
\end{figure}

The IPM method, which was introduced in \cite{poette2009uncertainty}, is constructed to overcome these problems.
The idea of the IPM method is to choose the ansatz $\mathcal{U}_{ME}$ that minimizes the convex entropy $\langle s(u) \rangle$ under the moment constraints $\bm{u} = \langle u \bm{\varphi} \rangle$, i.e.,
\begin{align}\label{eq:primalProblem}
\mathcal{U}_{ME}(\bm{u}) = \argmin_{u} \langle s(u) \rangle \enskip \text{ subject to } \bm{u} = \langle u \bm{\varphi} \rangle.
\end{align}
This problem has the unconstrained finite-dimensional dual problem
\begin{align}\label{eq:dualProblem}
 \bm{\hat \lambda}(\bm{u}) := \argmin_{\bm{\lambda} \in \mathbb{R}^{N + 1}}
  \langle s_*(\bm{\lambda}^T \bm{\varphi})\rangle - \bm{\lambda}^T \bm{u},
\end{align}
where $s_*$ is the Legendre transformation of $s$, and $\bm{\lambda}\in\mathbb{R}^{N+1}$ are called the dual variables.
The solution to the primal problem \eqref{eq:primalProblem} is given by
\begin{align}\label{eq:ansatz}
 \mathcal{U}_{ME}(\bm{u}) = \left( s' \right)^{-1}(\bm{\hat{\lambda}}^T \bm{\varphi})
  = s'_*(\bm{\hat{\lambda}}^T \bm{\varphi}).
\end{align}
We also use the notation
\begin{align}
 \mathcal{U}_{ME}(\bm{u}) =: u_{ME}(\hat \Lambda)
\end{align}
for the ansatz, where $u_{ME} = (s')^{-1}$, and $\hat \Lambda = \bm{\hat{\lambda}}^T \bm{\varphi}$ is what we call the dual state.
Inserting this into \eqref{eq:origSys} leads to the closed moment system
\begin{subequations}\label{eq:sysMEU}
\begin{align}
\partial_t \bm{u}+\partial_x \langle f(\mathcal{U}_{ME}(\bm{u}))\bm{\varphi}\rangle &= 0, \\
\bm{u}(0,x) &= \langle u_0(x,\cdot)\bm{\varphi}\rangle.
\end{align} 
\end{subequations}
This is the system of equations of the IPM method.

The IPM system \eqref{eq:sysMEU} has nice features.
First, it generalizes the stochastic-Galerkin method, in that the SG method can be recovered with the quadratic entropy $s(u) =  \frac12 u^2$.
Second, it is hyperbolic for any strictly convex $s$.
Its solutions also satisfy the entropy-dissipation law
\begin{linenomath}\begin{align*}
\frac d{dt} S(t) := \frac d{dt}\int_{\mathcal{D}} \langle s(\mathcal{U}_{ME}(\bm{u}(t,x)))\rangle dx \le 0,
\end{align*}\end{linenomath}
see for example \cite{poette2009uncertainty,kusch2015uncertainty}. Also, with the IPM method one can design the entropy so that the entropy ansatz $\mathcal{U}_{ME}(\bm{u}) = u_{ME}(\hat \Lambda)$ only takes values within a specified interval.
This gives a guaranteed bound on the magnitude of oscillations.
In \cite{poette2009uncertainty} the log-barrier entropy density
\begin{align}\label{eq:log-barrier}
s(u) = -\ln(u-u_-)-\ln(u_+-u)
\end{align}
is used, where the scalars $u_-$ and $u_+$, $u_- < u_+$, are user-specified parameters.
Clearly this entropy does not allow an ansatz which takes on values outside the interval $(u_-, u_+)$.
Since we know that the solution should be bounded by
\begin{linenomath}\begin{align*}
 u_{\min}:=\min_{x,\xi}u_0(x,\xi) 
 \quad \text{and} \quad
 u_{\max}:= \max_{x,\xi}u_0(x,\xi),
\end{align*}\end{linenomath}
one can take $u_+ := u_{\max}+\Delta u$ and $u_- := u_{\min}-\Delta u$ with $\Delta u \in [0, \infty)$.

When the solution to the primal problem \eqref{eq:primalProblem} can only take values in $(u_-, u_+)$, the problem is only feasible if the moment vector $\bm{u}$ lies in the set
\begin{equation}\label{eq:realizableSet}
\mathcal{R} := \left\{ \left. \bm{u}\in\mathbb{R}^{N+1} \right\vert \exists u : \Theta \to (u_-,u_+) \text{ such that } \bm{u} = \langle u \bm{\varphi}\rangle \right\}.
\end{equation}
We call $\mathcal{R}$ the realizable set.
This is important to keep in mind when designing numerical methods, because when the numerical solution leaves the realizable set $\mathcal{R}$, the ansatz $\mathcal{U}_{ME}$ is undefined, and so the IPM method crashes.
We consider this in the next two sections.

\section{Discretization of the IPM system}
\label{sec:Section3}
The IPM system \eqref{eq:sysMEU} can be rewritten as
\begin{linenomath}\begin{align*}
\partial_t \bm{u}+\partial_x \bm{F}(\bm{u}) = \bm{0}
\end{align*}\end{linenomath}
with the flux $\bm{F}(\bm{u})=\langle f(u_{ME}(\hat \Lambda(\bm{u})))\bm{\varphi} \rangle$ depending on the dual state 
\begin{linenomath}\begin{align*}
\hat \Lambda(\bm{u}) = \bm{\hat{\lambda}}(\bm{u})^T\bm{\varphi}.
\end{align*}\end{linenomath}
For efficiency of exposition, we sometimes omit the dependence on $\bm{u}$.
The IPM system is hyperbolic, so it is naturally solved by a finite-volume method. First we discretize the spatial domain into cells. The discrete unknowns are chosen to be the spatial averages over each cell at time $t_n$, given by
\begin{linenomath}\begin{align*}
u_{ij}^n \simeq \frac{1}{\Delta x}\int_{x_{j-1/ 2}}^{x_{j+1/ 2}}u_i(t_n,x) dx.
\end{align*}\end{linenomath}
If a moment vector in cell $j$ at time $t_n$ is denoted as $\bm{u}_j^n = (u_{0j}^n,\cdots,u_{Nj}^n)^T$, the finite-volume scheme can be written in conservative form with the numerical flux $\bm{G}$ as
\begin{align}\label{eq:IPMDiscretization}
\bm{u}_{j}^{n+1} = \bm{u}_{j}^{n}  - \frac{\Delta t}{\Delta x}\left( \bm{G}(\bm{u}_j^n,\bm{u}_{j+1}^n)- \bm{G}(\bm{u}_{j-1}^n,\bm{u}_{j}^n)\right).
\end{align}
The numerical flux is assumed to be consistent, i.e., that $\bm{G}(\bm{u},\bm{u})=\bm{F}(\bm{u})$.
To ensure stability, a CFL condition has to be derived by investigating the eigenvalues of $\nabla \bm{F}$.

When a consistent numerical flux $g = g(u_\ell, u_r)$ is available for the deterministic problem \eqref{eq:origProblem}, then for the IPM system we can simply take
\begin{linenomath}\begin{align*}
 \bm{G}(\bm{u}_{j}^n,\bm{u}_{j+1}^n) = \langle g(u_{ME}(\hat\Lambda_{j}^n),u_{ME}(\hat\Lambda_{j + 1}^n)))\bm{\varphi}\rangle,
\end{align*}\end{linenomath}
where $\hat\Lambda_{j}^n :=\hat\Lambda(\bm{u}_{j}^n)$. This choice of the numerical flux is a common choice in kinetic theory and is called kinetic flux.
The time update of the moment vector now becomes
\begin{align}\label{eq:exactUpdate}
\bm{u}_{j}^{n+1} = \bm{u}_{j}^{n}- \frac{\Delta t}{\Delta x}\left( \langle g(u_{ME}(\hat \Lambda_j^n),u_{ME}(\hat \Lambda_{j+1}^n))\bm{\varphi}\rangle - \langle g(u_{ME}(\hat \Lambda_{j-1}^n),u_{ME}(\hat \Lambda_{j}^n))\bm{\varphi} \rangle\right).
\end{align}

Unfortunately \eqref{eq:exactUpdate} cannot be implemented because the dual problem cannot be solved exactly.%
\footnote{
Equation \eqref{eq:exactUpdate} also includes integral evaluations which cannot be computed in closed form.
Their approximation by numerical quadrature, however, does not play a role in the realizability problems we discuss below.
}
Instead, it must be solved numerically, for example with Newton's method.
The stopping criterion for the numerical optimizer ensures that the approximate multiplier vector it returns, which we denote $\xoverline{\bm{\lambda}}_j^n$ for the moment vector $\bm{u}_j^n$, satisfies the stopping criterion
\begin{align}\label{eq:tauCrit}
\left\Vert \bm{u}_j^n-\left\langle u_{ME}\left(\left(\xoverline{\bm{\lambda}}_j^n\right)^T\bm{\varphi}\right)\bm{\varphi}\right\rangle \right\Vert < \tau.
\end{align}
This is derived from the first-order necessary conditions for the dual problem.
Once the numerical optimizer finds such a $\xoverline{\bm{\lambda}}_j^n$, the corresponding dual state $\xoverline{\Lambda}_j^n := \left(\xoverline{\bm{\lambda}}_j^n\right)^T\bm{\varphi}$ can be used in \eqref{eq:exactUpdate} for the unknown $\hat \Lambda_j^n$.
This gives Algorithm \ref{alg:seq}.

\begin{algorithm}[H]
\begin{algorithmic}[1]
\FOR{$j=0$ to $NCells+1$}
\STATE $\bm{u}_j^0 = \frac{1}{\Delta x} \int_{x_{j-1/ 2}}^{x_{j+1/ 2}} \langle u_0(x, \cdot) \bm{\varphi} \rangle dx$
\ENDFOR
\FOR{$n=0$ to $NTimeSteps$}
\FOR{$j=0$ to $NCells+1$}
\STATE $\xoverline{\bm{\lambda}}_j^n \approx \argmin_{\bm{\lambda}}  \left( \langle s_*(\bm{\lambda}^T \bm{\varphi})\rangle - \bm{\lambda}^T \bm{u}_j^n \right)$
\hfill such that \eqref{eq:tauCrit} holds
\STATE $\xoverline \Lambda_j^n = \left(\xoverline{\bm{\lambda}}_j^n\right)^T\bm{\varphi}$
\ENDFOR
\FOR{$j=1$ to $NCells$}
\STATE $\bm{u}_{j}^{n+1} = \bm{u}_{j}^{n}- \frac{\Delta t}{\Delta x}\left( \langle g(u_{ME}(\xoverline \Lambda_j^n),u_{ME}(\xoverline \Lambda_{j+1}^n))\bm{\varphi}\rangle - \langle g(u_{ME}(\xoverline \Lambda_{j-1}^n),u_{ME}(\xoverline \Lambda_{j}^n))\bm{\varphi} \rangle\right)$ 
\ENDFOR
\ENDFOR
\end{algorithmic}
\caption{IPM for Uncertainty Quantification}
\label{alg:seq}
\end{algorithm}

For most test cases in this paper, Dirichlet boundary conditions are used, i.e. ghost cells  with moment vectors $\bm{u}_{0}^n = \langle u_L \bm{\varphi}\rangle$ and $\bm{u}_{NCells+1}^n = \langle u_R \bm{\varphi}\rangle$ are implemented. Algorithm \ref{alg:seq} crashes when the numerical optimizer cannot find a moment vector $\lambdabarjn$ satisfying the stopping criterion.
This can only%
\footnote{Except for some realizable cases where the problem is so poorly conditioned that the numerical optimizer fails to find the minimizer even though it exists.
See, e.g., \cite{alldredge2012high}.}
happen when the moment vector $\bm{u}_j^n$ is not realizable.
We tested an implementation of Algorithm \ref{alg:seq} on the uncertain Burgers' equation
as described in section \ref{sec:UBurgers}.
We chose the initial condition given in \eqref{eq:IC1}, and ran simulations with different values of the optimization tolerance $\tau$ and the solution-bound parameter $\Delta u$.
We chose the time step $\Delta t$ according to the classical time-step restriction
\begin{equation}
\frac{\Delta t}{\Delta x} \max_{u\in[u_-,u_+]} \vert f'(u)\vert \le 1.
\end{equation}

The solution bounds $u_-$ and $u_+$, which parametrize the entropy \eqref{eq:log-barrier}, are important parameters in the implementation.
Thus one would like to choose $\Delta u$ as small as possible.
Furthermore, since the maximum velocity $\max \{f'(u)\}$ is determined over the interval $u \in [u_-, u_+]$, the larger we take $\Delta u$, the larger the maximum velocity may be.
A larger maximum velocity would lead the CFL condition to impose a tighter time-step restriction and add numerical viscosity.
In \cite{poette2009uncertainty} the authors chose $u_+ = u_{\max}+\Delta u$ and $u_- = u_{\min}-\Delta u$ with $\Delta u = 0.5$. Consequently, over- and undershoots as large as $0.5$ are allowed, and we test a few values here.
We chose all other parameters in the experiments as given in section \ref{sec:UBurgers}.

In Table \ref{tab:convergenceFails}, for different values of the optimization tolerance $\tau$ and the entropy parameter $\Delta u$ we report how long Algorithm \ref{alg:seq} ran until it crashed due to loss of realizability.
The results indicate that this is more likely for smaller values of $\Delta u$, while decreasing the optimization tolerance seems to help slightly.
It is clear, then, that this direct insertion of the numerical optimizer in Algorithm \ref{alg:seq} gives a method which does not preserve realizability.

\begin{table}[tbhp]
\caption{Number of time steps until the dual problem cannot be solved. Check marks indicate successful calculations for all time steps.}
\label{tab:convergenceFails}
\centering
\begin{tabular}{|l|ccccc|}
  \hline
  \diagbox{$\Delta u$}{$\tau$}
               &  $10^{-1}$ & $10^{-2}$ & $10^{-3}$ & $10^{-4}$ & $10^{-5}$ \\
  \hline
  $10^{-1}$ &  \cmark  &    \cmark      &      \cmark      &     \cmark    &   \cmark\\
  $10^{-3}$  & 3  &     3     &      12      &      \cmark   &   \cmark\\
  $10^{-5}$  &  \cmark &      9    &       6     &     8    &   19\\
  \hline
\end{tabular}
\end{table}

In addition to an increased chance of crashes, smaller values of $\Delta u$ can also lead to more oscillatory solutions.
We consider this aspect later in Section \ref{sec:fd-entropy}.
First, we treat the problem of realizability.

\section{Modified scheme to preserve realizability}\label{sec:Realizability}

To understand the reason for the loss of realizability in our tests, we analyze the effects of the optimization error.
It turns out that the optimization error can destroy the monotonicity properties that would otherwise be inherited from the underlying scheme for the original PDE \eqref{eq:origProblem} and would guarantee bounds on the discrete solution.

\subsection{Monotonicity and the optimization error}
\label{sec:Monotonicity}

The main step in Algorithm \ref{alg:seq} is
\begin{align}\label{eq:implementedUpdate}
 \bm{u}_{j}^{n+1} = \bm{u}_{j}^{n}- \frac{\Delta t}{\Delta x}\left( \langle g(u_{ME}(\xoverline \Lambda_j^n),u_{ME}(\xoverline \Lambda_{j+1}^n))\bm{\varphi}\rangle - \langle g(u_{ME}(\xoverline \Lambda_{j-1}^n),u_{ME}(\xoverline \Lambda_{j}^n))\bm{\varphi} \rangle\right).
\end{align}
We can analyze the right-hand side as a function of the dual states $\xoverline{\Lambda}_j^n$ (with the dependence on $\xi$ suppressed) by defining
\begin{align}\label{eq:HfirstOrder}
 H_j^n(\xoverline \Lambda_{j - 1}^n, \xoverline \Lambda_j^n,
   \xoverline \Lambda_{j + 1}^n)
  &:= u_{ME}(\xoverline \Lambda_j^n + \Delta \Lambda_j^n) \\
  &\qquad - \frac{\Delta t}{\Delta x} \left( 
   g(u_{ME}(\xoverline \Lambda_j^n),
   u_{ME}(\xoverline \Lambda_{j+1}^n))
   - g(u_{ME}(\xoverline \Lambda_{j-1}^n),
   u_{ME}(\xoverline \Lambda_{j}^n)) \right), \nonumber
\end{align}
where $\Delta \Lambda_j^n := \hat \Lambda_j^n - \xoverline \Lambda_j^n$.
Thus \eqref{eq:implementedUpdate} can be written as
\begin{align}\label{eq:schemeInexact1}
\bm{u}_{j}^{n+1} =  \left\langle H_j^n(\xoverline{\Lambda}_{j-1}^n,\xoverline{\Lambda}_{j}^n,\xoverline{\Lambda}_{j+1}^n)\bm{\varphi}\right\rangle.
\end{align}
Notice that $H_j^n$ depends itself on the spatial cell and time index because the optimization error, which shows up as $\Delta \Lambda_j^n$, can vary between cells and across time.
We have written $\bm{u}_j^{n + 1}$ simply as the moments of the update function $H_j^n$, so the realizability of $\bm{u}_j^{n + 1}$ can be established by considering whether $H_j^n$ lies in $(u_-, u_+)$.

This leads directly to the concept of monotone schemes for scalar conservation laws, because monotone schemes give numerical solutions which satisfy a maximum principle.
Thus monotonicity can be used to ensure realizability.

\begin{proposition}\label{th:MinMax}
Assume $H_j^n$ is monotonically increasing in each argument.
Then if the entropy ansatz $u_{ME}$ only takes values in $(u_-, u_+)$, the moment vector
$\bm{u}_j^{n + 1}$ computed according to \eqref{eq:schemeInexact1} (or equivalently \eqref{eq:implementedUpdate}) is realizable for any dual states $\xoverline \Lambda_{j - 1}^n$, $\xoverline \Lambda_j^n$, and $\xoverline \Lambda_{j + 1}^n$.
\end{proposition}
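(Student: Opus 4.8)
The plan is to exhibit an explicit function that realizes $\bm{u}_j^{n+1}$. Since \eqref{eq:schemeInexact1} already expresses $\bm{u}_j^{n+1} = \langle H_j^n(\xoverline{\Lambda}_{j-1}^n,\xoverline{\Lambda}_{j}^n,\xoverline{\Lambda}_{j+1}^n)\bm{\varphi}\rangle$, the natural candidate is the function $\xi \mapsto H_j^n(\xoverline{\Lambda}_{j-1}^n(\xi),\xoverline{\Lambda}_{j}^n(\xi),\xoverline{\Lambda}_{j+1}^n(\xi))$. By the definition \eqref{eq:realizableSet} of $\mathcal{R}$, it then suffices to show that this function takes values in $(u_-,u_+)$, i.e. that $u_- < H_j^n < u_+$ pointwise in $\xi$. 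I would therefore fix $\xi$ and view $H_j^n$ from \eqref{eq:HfirstOrder} as a function of the three real numbers $\xoverline{\Lambda}_{j-1}^n(\xi)$, $\xoverline{\Lambda}_{j}^n(\xi)$, $\xoverline{\Lambda}_{j+1}^n(\xi)$, with the offset $\Delta \Lambda_j^n(\xi)$ held fixed.

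The key step is to combine the assumed monotonicity with consistency of the numerical flux. First I would bound $H_j^n$ from above: choose any $M$ larger than all three arguments; since $H_j^n$ is increasing in each argument, $H_j^n \le H_j^n(M,M,M)$. When the three arguments coincide, the two flux evaluations both become $g(u_{ME}(M),u_{ME}(M))$, and by consistency $g(u,u)=f(u)$ they cancel, leaving $H_j^n(M,M,M) = u_{ME}(M + \Delta \Lambda_j^n)$. Because $u_{ME}$ takes values only in $(u_-,u_+)$ by hypothesis, this is strictly below $u_+$, so $H_j^n < u_+$. Symmetrically, picking $m$ smaller than all three arguments and using monotonicity in the opposite direction gives $H_j^n \ge H_j^n(m,m,m) = u_{ME}(m + \Delta \Lambda_j^n) > u_-$. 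Combining the two bounds yields $u_- < H_j^n < u_+$ for every $\xi$, so the candidate function maps $\Theta$ into $(u_-,u_+)$ and $\bm{u}_j^{n+1} \in \mathcal{R}$.

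I expect the only delicate points to be the flux cancellation, which is precisely where consistency of $g$ enters, and the strictness of the inequalities: realizability in \eqref{eq:realizableSet} requires the reconstructing function to stay inside the \emph{open} interval, which holds because $u_{ME}$ is strictly interior to $(u_-,u_+)$ for every finite dual state, so both $u_{ME}(M+\Delta\Lambda_j^n)$ and $u_{ME}(m+\Delta\Lambda_j^n)$ fall strictly within the bounds. I would also emphasize that this argument requires no CFL restriction of its own: the entire weight of the proof rests on the monotonicity hypothesis, whose verification together with the associated time-step condition is a separate issue to be treated afterward.
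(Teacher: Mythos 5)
Your proof is correct and follows essentially the same route as the paper: bound $H_j^n$ pointwise in $\xi$ by evaluating it at a common upper (resp.\ lower) bound of the three dual states, observe that the flux terms cancel, and conclude $u_- < H_j^n < u_+$ so that $\bm{u}_j^{n+1} = \langle H_j^n \bm{\varphi}\rangle$ is realizable by definition of $\mathcal{R}$. The only cosmetic difference is that the cancellation $g(u_{ME}(M),u_{ME}(M)) - g(u_{ME}(M),u_{ME}(M)) = 0$ holds because the two evaluations are identical, so consistency of $g$ is not actually needed at that step.
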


\begin{proof}
Let us define $\xoverline{\Lambda}_{j, \max}^n := \max \{ \xoverline{\Lambda}_{j - 1}^n, \xoverline{\Lambda}_{j}^n, \xoverline{\Lambda}_{j + 1}^n \}$. By monotonicity we have for each $\xi$ in each spatial cell
\begin{linenomath}\begin{align*}
H_j^n(\xoverline{\Lambda}_{j - 1}^n, \xoverline{\Lambda}_{j}^n,
  \xoverline{\Lambda}_{j + 1}^n)
 &\leq H_j^n(\xoverline{\Lambda}_{j, \max}^n,
  \xoverline{\Lambda}_{j, \max}^n, \xoverline{\Lambda}_{j, \max}^n)
 = u_{ME}(\xoverline{\Lambda}_{j, \max}^n+\Delta \Lambda_j^n)
 < u_+.
\end{align*}\end{linenomath}
The other direction, $H_j^n > u_-$ can be shown analogously.
Finally, since $H_j^n \in (u_-, u_+)$ for every $\xi$, then $\bm{u}_j^{n + 1} = \langle H_j^n \bm{\varphi} \rangle$ is realizable. \qed
\end{proof}

Now, monotonicity of $H_j^n$ depends on the monotonicity of the scheme defined by the numerical flux $g = g(u_\ell, u_r)$ for the original PDE \eqref{eq:origProblem}.
We assume that under the standard CFL condition,
\begin{equation}\label{eq:CFL}
\frac{\Delta t}{\Delta x} \max_{u\in[u_-,u_+]} \vert f'(u)\vert \le 1,
\end{equation}
$g(u_\ell, u_r)$ gives a monotone scheme for the underlying equation, i.e., that the function
\begin{align}\label{eq:h}
h(u, v, w) = v - \frac{\Delta t}{\Delta x} \left( g(v,w) - g(u,v) \right)
\end{align}
is monotonically increasing in each argument.
This implies
\begin{subequations}
\begin{align}
\frac{\partial g}{\partial u_\ell} &\geq 0, \label{eq:mono1stArg} \\
1-\frac{\Delta t}{\Delta x}\left( \frac{\partial g}{\partial u_\ell}  - \frac{\partial g}{\partial u_r} \right) &\geq 0, \label{eq:mono2ndArg}\\
\frac{\partial g}{\partial u_r} &\leq 0. \label{eq:mono3rdArg}
\end{align}
\end{subequations}
Using this along with properties of the entropy ansatz, we can immediately show that $H_j^n$ is monotone in the first and third arguments, since
\begin{subequations}\label{eq:firstThirdInputH}
\begin{align}
\frac{\partial H_j^n}{\partial \xoverline\Lambda_{j - 1}}
 &= \frac{\Delta t}{\Delta x} \frac{\partial g}{\partial u_\ell}
  u_{ME}'(\xoverline \Lambda_{j-1}^n)
 = \frac{\Delta t}{\Delta x} \underbrace{
  \frac{\partial g}{\partial u_\ell}}
  _{\substack{\geq 0 \\ \text{ by \eqref{eq:mono1stArg}}}}
  \underbrace{\frac{1}{s''(\xoverline \Lambda_{j - 1}^n)}}
  _{\substack{\geq 0 \\ \text{ by convexity}}} \geq 0, \\
\frac{\partial H_j^n}{\partial \xoverline\Lambda_{j + 1}}
 &= -\frac{\Delta t}{\Delta x}
  \underbrace{\frac{\partial g}{\partial u_r}}
  _{\substack{\leq 0 \\ \text{ by \eqref{eq:mono3rdArg}}}}
  \underbrace{\frac{1}{s''(\xoverline \Lambda_{j+1}^n)}}
  _{\substack{\geq 0 \\ \text{ by convexity}}}\geq 0.
\end{align}
\end{subequations}
These properties hold for \emph{any} value of $\Delta \Lambda _j^n$.
But in the second argument, the optimization error $\Delta \Lambda _j^n$ can destroy monotonicity:
\begin{subequations}\label{eq:dHdLambdaj}
\begin{align}\label{eq:monI}
\frac{\partial H_j^n}{\partial \xoverline\Lambda_{j}}
 &= \frac1{s''(\xoverline{\Lambda}_{j}^n + \Delta\Lambda_j^n)}
  - \frac{\Delta t}{\Delta x} \left( \frac{\partial g}{\partial u_\ell}
  \frac{1}{s''(\xoverline{\Lambda}_{j}^n)}
  - \frac{\partial g}{\partial u_r}
  \frac{1}{s''(\xoverline{\Lambda}_{j}^n)} \right) \\
 &= \frac1{s''(\hat{\Lambda}_{j}^n)}
  \left(1 - \frac{s''(\hat{\Lambda}_{j}^n)}{s''(\xoverline{\Lambda}_{j}^n)}
  \frac{\Delta t}{\Delta x} \left(
  \frac{\partial g}{\partial u_\ell} - \frac{\partial g}{\partial u_r}
  \right) \right). \label{eq:noMono}
\end{align}
\end{subequations}
The $s''$ factor in front is again nonnegative by convexity, but since the ratio $s''(\hat{\Lambda}_{j}^n) / s''(\xoverline{\Lambda}_{j}^n)$ can certainly be larger than one, the standard CFL condition \eqref{eq:CFL} cannot be applied to show nonnegativity of the second factor in \eqref{eq:noMono}.

There are now two ways to achieve monotonicity despite the optimization error.

\subsection{Modifying the CFL condition}
\label{sec:modifiedCFL}

The more precisely the numerical optimizer solves the optimization problem, the smaller the ratio $s''(\hat{\Lambda}_{j}^n) / s''(\xoverline{\Lambda}_{j}^n)$ becomes.
This suggests using it as a stopping criterion and then incorporating it into a modified CFL condition. Summing up the findings from subsection~\ref{sec:Monotonicity}, we obtain the following theorem:

\begin{theorem}\label{th:CFLOrderOne}
Assume that the entropy ansatz only takes values in $(u_-, u_+)$ and that $g$ gives a monotone scheme.
Then when the numerical optimizer enforces the stopping criterion
\begin{align}\label{eq:modCFL}
 \frac{s''(\hat{\Lambda}_{j}^n)}{s''(\xoverline{\Lambda}_{j}^n)}\le \gamma,
\end{align}
the new moment vector $\bm{u}^{n + 1}_j$ computed by \eqref{eq:schemeInexact1} (i.e., \eqref{eq:implementedUpdate}) is realizable under the modified CFL condition
\begin{align}\label{eq:CFLfirstOrder}
\gamma\frac{\Delta t}{\Delta x} \max_{u\in[u_-,u_+]} \vert f'(u)\vert \le 1.
\end{align}
\end{theorem}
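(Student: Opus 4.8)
The plan is to deduce everything from Proposition~\ref{th:MinMax}. That proposition already guarantees that $\bm u_j^{n+1}$ is realizable as soon as $H_j^n$ is monotonically increasing in each of its three arguments, so the entire task reduces to verifying this monotonicity under the stopping criterion \eqref{eq:modCFL} together with the modified CFL condition \eqref{eq:CFLfirstOrder}. For the first and third arguments there is nothing left to do: the computation in \eqref{eq:firstThirdInputH} already shows $\partial H_j^n/\partial\xoverline{\Lambda}_{j-1} \ge 0$ and $\partial H_j^n/\partial\xoverline{\Lambda}_{j+1} \ge 0$ using only $\partial g/\partial u_\ell \ge 0$, $\partial g/\partial u_r \le 0$, and convexity of $s$, and, importantly, those bounds hold for \emph{any} optimization error $\Delta\Lambda_j^n$. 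The difficulty is concentrated entirely in the second argument.

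For the second argument I would start from the expression \eqref{eq:noMono} for $\partial H_j^n/\partial\xoverline{\Lambda}_j$. Its prefactor $1/s''(\hat\Lambda_j^n)$ is nonnegative by convexity, so it suffices to show that the bracketed factor
\[
1 - \frac{s''(\hat\Lambda_j^n)}{s''(\xoverline{\Lambda}_j^n)}\,\frac{\Delta t}{\Delta x}\left( \frac{\partial g}{\partial u_\ell} - \frac{\partial g}{\partial u_r}\right)
\]
is nonnegative. The key preliminary observation is that $\partial g/\partial u_\ell - \partial g/\partial u_r \ge 0$ by \eqref{eq:mono1stArg} and \eqref{eq:mono3rdArg}, so the subtracted quantity is a product of nonnegative factors; this is precisely what lets me insert the stopping criterion \eqref{eq:modCFL} in the inequality-preserving direction, namely
\[
\frac{s''(\hat\Lambda_j^n)}{s''(\xoverline{\Lambda}_j^n)}\,\frac{\Delta t}{\Delta x}\left( \frac{\partial g}{\partial u_\ell} - \frac{\partial g}{\partial u_r}\right) \le \gamma\,\frac{\Delta t}{\Delta x}\left( \frac{\partial g}{\partial u_\ell} - \frac{\partial g}{\partial u_r}\right).
\]

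It then remains to bound the right-hand side by $1$, and this is where the modified CFL condition does its work. The clean way to see it is to read \eqref{eq:CFLfirstOrder} as the \emph{standard} CFL condition \eqref{eq:CFL} for the rescaled time step $\Delta t' := \gamma\Delta t$, since $\gamma\frac{\Delta t}{\Delta x}\max_{u\in[u_-,u_+]}|f'(u)| \le 1$ is literally $\frac{\Delta t'}{\Delta x}\max_{u\in[u_-,u_+]}|f'(u)| \le 1$. Because $g$ is assumed to give a monotone scheme under \eqref{eq:CFL}, the monotonicity inequality \eqref{eq:mono2ndArg} applied with $\Delta t'$ yields $\frac{\Delta t'}{\Delta x}(\partial g/\partial u_\ell - \partial g/\partial u_r) \le 1$, i.e.\ $\gamma\frac{\Delta t}{\Delta x}(\partial g/\partial u_\ell - \partial g/\partial u_r) \le 1$. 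Chaining this with the previous estimate shows the bracketed factor is nonnegative, hence $\partial H_j^n/\partial\xoverline{\Lambda}_j \ge 0$; monotonicity in all three arguments is then in hand and Proposition~\ref{th:MinMax} closes the argument.

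The main obstacle here is conceptual rather than computational: one must recognize that the factor $\gamma$ coming from the stopping criterion can be absorbed into an effective time step $\gamma\Delta t$, which reduces the modified CFL condition to the ordinary one for which monotonicity of $g$ was assumed. The secondary point that requires care is the sign of $\partial g/\partial u_\ell - \partial g/\partial u_r$: it is only because this difference is nonnegative that the bound $s''(\hat\Lambda_j^n)/s''(\xoverline{\Lambda}_j^n) \le \gamma$ may be substituted without reversing the inequality.
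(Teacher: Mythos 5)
Your proposal is correct and follows essentially the same route as the paper: the paper presents this theorem as a direct summary of the monotonicity analysis in Section~\ref{sec:Monotonicity}, namely that \eqref{eq:firstThirdInputH} handles the first and third arguments unconditionally, while the ratio $s''(\hat{\Lambda}_{j}^n)/s''(\xoverline{\Lambda}_{j}^n)$ appearing in \eqref{eq:noMono} is controlled by the stopping criterion \eqref{eq:modCFL} and absorbed into the tightened CFL condition \eqref{eq:CFLfirstOrder}, after which Proposition~\ref{th:MinMax} yields realizability. Your explicit observations --- that $\partial g/\partial u_\ell - \partial g/\partial u_r \ge 0$ is what permits substituting the bound $\gamma$ without reversing the inequality, and that \eqref{eq:CFLfirstOrder} is the standard CFL condition for the effective time step $\gamma\Delta t$ --- are exactly the details the paper leaves implicit.
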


The condition \eqref{eq:modCFL} can be used instead of or in addition to \eqref{eq:tauCrit}.
The user chooses the parameter $\gamma$.
Larger values of $\gamma$ make the condition easier to fulfill, i.e., require fewer optimization iterations, but come at the cost of requiring smaller time steps and leading to more diffusive solutions.

Note that the condition \eqref{eq:modCFL} uses the unknown exact dual state $\hat{\Lambda}_{j}^n$. An approximation of this state can be constructed by making use of the Newton step. If $\bm{\lambda}$ is an iterate of the optimization method, $\bm{H}$ is the Hessian and $\bm{g}$ is the gradient of the dual problem \eqref{eq:dualProblem}, we approximate $\bm{\hat\lambda}$ by
\begin{align}\label{eq:lambda-hat-est}
\bm{\hat\lambda} \approx \bm{\lambda} - \zeta \bm{H}^{-1}(\bm{\lambda}) \bm{g}(\bm{\lambda}).
\end{align}
A safety parameter $\zeta$ is used to prevent underestimating the ratio ${s''(\hat{\Lambda}_{j}^n) / s''(\xoverline{\Lambda}_j^n)}$.

There are potential drawbacks of using Algorithm \ref{alg:seq} with the modified CFL condition \eqref{eq:CFLfirstOrder}.
First, it further restricts the time step, which introduces numerical diffusion.
Second, it's not immediately clear if we can practically achieve a stopping criterion of the form \eqref{eq:modCFL} for a reasonably small value of $\gamma$. 
Third, the choice $\Delta u=0$ is prohibited if the initial condition takes on values of $\min u_0$ or $\max u_0$ on a nonzero measure.
This is because in this case, the correct dual state $\hat{\Lambda}$ goes to infinity, leading to an infinite value of $\gamma$ no matter how precisely the numerical optimizer solves the dual problem.
We explore these potential problems in our numerical results in section \ref{sec:results}.

\begin{remark}
The issue of realizability is also an issue for minimum-entropy methods in kinetic theory.
A realizability-preserving modified CFL condition similar to the one presented in Theorem \ref{th:CFLOrderOne} was derived in \cite{alldredge2012high}.
But in kinetic theory, one only needs to ensure the nonnegativity of the underlying update, whereas we need to enforce both upper and lower bounds.
Because of this difference the modified CFL condition from \cite{alldredge2012high} is not enough to ensure realizability for the IPM method.
\end{remark}
\subsection{Modifying the scheme}
\label{sec:replace-moments}

Another way to prevent the optimization error from destroying the monotonicity properties of the underlying scheme is to remove the optimization error completely from our application of the underlying scheme, so that the ratio ${s''(\hat{\Lambda}_j^n) / s''(\xoverline{\Lambda}_j^n)}$ doesn't even appear in \eqref{eq:dHdLambdaj}.
Since the exact dual state cannot be computed, this means using the dual states $\xoverline{\Lambda}_j^n$ also in the first term of $H_j^n$.

More specifically, let us define the modified update function
\begin{align}
 \widetilde H(\xoverline \Lambda_{j - 1}^n, \xoverline \Lambda_j^n,
   \xoverline \Lambda_{j + 1}^n)
  &:= u_{ME}(\xoverline \Lambda_j^n) \\
  &\qquad - \frac{\Delta t}{\Delta x} \left( 
   g(u_{ME}(\xoverline \Lambda_j^n),
   u_{ME}(\xoverline \Lambda_{j+1}^n))
   - g(u_{ME}(\xoverline \Lambda_{j-1}^n),
   u_{ME}(\xoverline \Lambda_{j}^n)) \right). \nonumber
\end{align}
(Notice that since the optimization error plays no role, the modified update function itself no longer depends on the spatial cell or time.)
Now $\widetilde H$ immediately inherits the monotonicity properties of $h$ in \eqref{eq:h} under the original CFL condition \eqref{eq:CFL}, no matter how big or small the optimization error is.
The algorithm can be written in the original form as
\begin{align}
 \bm{u}_{j}^{n+1} = \xoverline{\bm{u}}_{j}^{n}
  - \frac{\Delta t}{\Delta x} \left(
  \langle g(u_{ME}(\xoverline{\Lambda}_j^n),
  u_{ME}(\xoverline{\Lambda}_{j+1}^n)) \bm{\varphi} \rangle
  - \langle g(u_{ME}(\xoverline{\Lambda}_{j-1}^n),
  u_{ME}(\xoverline{\Lambda}_{j}^n)) \bm{\varphi} \rangle\right),
\end{align}
and we present it in Algorithm \ref{alg:seqMod}.

\begin{algorithm}[H]
\begin{algorithmic}[1]
\FOR{$j=0$ to $NCells+1$}
\STATE $\bm{u}_j^0 = \frac{1}{\Delta x} \int_{x_{j-1/ 2}}^{x_{j+1/ 2}} \langle u_0(x, \cdot) \bm{\varphi} \rangle dx$
\ENDFOR
\FOR{$n=0$ to $NTimeSteps$}
\FOR{$j=0$ to $NCells+1$}
\STATE $\xoverline{\bm{\lambda}}_j^n \approx \argmin_{\bm{\lambda}}  \left( \langle s_*(\bm{\lambda}^T \bm{\varphi})\rangle - \bm{\lambda}^T \bm{u}_j^n \right)$
\hfill such that \eqref{eq:tauCrit} holds
\STATE $\xoverline \Lambda_j^n = \left(\xoverline{\bm{\lambda}}_j^n\right)^T\bm{\varphi}$
\STATE $\xoverline{\bm{u}}_{j}^{n} = \langle u_{ME}(\xoverline \Lambda_j^n) \bm{\varphi} \rangle$
\ENDFOR
\FOR{$j=1$ to $NCells$}
\STATE $\bm{u}_{j}^{n+1} = \xoverline{\bm{u}}_{j}^{n}
  - \frac{\Delta t}{\Delta x} \left(
  \langle g(u_{ME}(\xoverline{\Lambda}_j^n),
  u_{ME}(\xoverline{\Lambda}_{j+1}^n)) \bm{\varphi} \rangle
  - \langle g(u_{ME}(\xoverline{\Lambda}_{j-1}^n),
  u_{ME}(\xoverline{\Lambda}_{j}^n)) \bm{\varphi} \rangle\right)$ 
\ENDFOR
\ENDFOR
\end{algorithmic}
\caption{Modified IPM algorithm}
\label{alg:seqMod}
\end{algorithm}

But of course, one cannot simply use any value of the optimization tolerance $\tau$ and expect to end up with accurate results.
However, when the numerical flux $\bm{G}$ is Lipschitz continuous in each argument with constant $K$, the error between the update of Algorithm \ref{alg:seqMod} and the exact update of \eqref{eq:exactUpdate} is simply $\mathcal{O}(\tau)$.
Indeed, let $c := \Delta t / \Delta x$; then we have
\begin{linenomath}\begin{align*}
\Bigg\| \bm{u}_j^n &-\frac{\Delta t}{\Delta x}\left(\bm{G}(\Lambda(\bm{u}_{j}^n),\Lambda(\bm{u}_{j+1}^n))-\bm{G}(\Lambda(\bm{u}_{j-1}^n),\Lambda(\bm{u}_{j}^n))\right) \\
&- \left( \xoverline{\bm{u}}_j^n -\frac{\Delta t}{\Delta x}\left(\bm{G}(\Lambda(\xoverline{\bm{u}}_{j}^n),\Lambda(\xoverline{\bm{u}}_{j+1}^n))-\bm{G}(\Lambda(\xoverline{\bm{u}}_{j-1}^n),\Lambda(\xoverline{\bm{u}}_{j}^n))\right) \right)\Bigg\| \le \left( 1 + 4cK \right) \tau.
\end{align*}\end{linenomath}
Therefore we simply need to choose $\tau$ with the order of accuracy of the one-step error, which in this case is $\mathcal{O}(\Delta t \Delta x) = \mathcal{O}(\Delta x^2)$.
Then the results computed by Algorithm \ref{alg:seqMod} have the same order of accuracy as those computed by the exact method.

The main drawback to Algorithm \ref{alg:seqMod} is that it is no longer in conservative form. However, when we take $\tau = \mathcal{O}(\Delta x^2)$, the nonconservative part vanishes as the grid is refined.
Furthermore, in our numerical results below we did not observe any large increases in error compared to the solutions computed using the method presented in section \ref{sec:modifiedCFL} with small values of $\gamma$.


\section{Extending the scheme to higher order}\label{sec:highOrder}
The main computational expense of minimum-entropy methods comes from the repeated numerical solution of the dual problem, which needs to be solved for every spatial cell.
With a high-order method, fewer spatial cells can achieve a desired level of accuracy.
In this section we show how to construct a realizability-preserving second-order method.

\subsection{Second-order spatial reconstruction}

First we give a stable second-order method for the original PDE \eqref{eq:origProblem} and then plug the entropy ansatz $u_{ME}$ into this method and integrate the equations against the basis functions $\bm{\varphi}$ to get a second-order method for the IPM system.
We start by defining a linear spatial reconstruction of the solution in each cell $j$ by $p_j^n(x) = u_j^n+(x-x_j)\sigma_j^n$.
Here $\sigma_j^n:=\sigma(u_{j-1}^n,u_j^n,u_{j+1}^n)$ is the slope of the reconstruction in cell $j$ at time step $t_n$.
We use the second-order stable \textit{minmod} slope, which is given by
\begin{linenomath}\begin{align*}
\sigma(u,v,w) = \frac{1}{\Delta x}\text{minmod}(w-v,v-u)
\end{align*}\end{linenomath}
with the minmod function
\begin{linenomath}\begin{align*}
\text{minmod}(a,b) = 
\begin{cases}
a  & \text{if } |a| < |b|, ab>0 \\
b  & \text{if } |b| < |a|, ab > 0 \\
0  & \text{else}
\end{cases}.
\end{align*}\end{linenomath}
The reconstructions give cell edge values
\begin{align}
 u_{j \pm 1/2}^{n, \mp} := u_j^n \pm \sigma_j^n\frac{\Delta x}2,
\end{align}
which are inserted into the numerical flux to give the time update:
\begin{align}\label{eq:schemeGHighOrder}
u_j^{n + 1} = u_j^n-\frac{\Delta t}{\Delta x}(g(u_{j+1/2}^-,u_{j+1/2}^+)-g(u_{j-1/2}^-,u_{j-1/2}^+)).
\end{align}

When we use the slopes given by the minmod limiter, the reconstructions further have the property that the edge values are bounded by the values of the cell means.
This property is crucial for realizability.%
\footnote{
For other slopes which do not have this property, one would have to implement a bound-preserving limiter, see e.g. \cite{liu1996nonoscillatory}.
}

In applying this scheme to the IPM method, we compute the slopes point-wise in $\xi$ using the entropy ans\"atze computed by the numerical optimizer, i.e.,
\begin{align}
 \sigma_j^n = \sigma(u_{ME}(\xoverline \Lambda_{j - 1}^n),
  u_{ME}(\xoverline \Lambda_j^n), u_{ME}(\xoverline \Lambda_{j + 1}^n)).
\end{align}
This gives the edge values
\begin{linenomath}\begin{align*}
u_{j \pm 1/2}^{n,\mp}(\xoverline{\Lambda}_{j-1}^n,\xoverline{\Lambda}_j^n,\xoverline{\Lambda}_{j+1}^n) &:= u_{ME}(\xoverline{\Lambda}_j^n) \pm \frac{\Delta x}{2}\sigma_j^n.
\end{align*}\end{linenomath}

Now we want to consider the monotonicity of the time update \eqref{eq:schemeGHighOrder} with respect to the dual states of the cell average and both sides of the neighboring edges.
For this we need to define the dual states of the edges,
\begin{align}\label{eq:edgeDualState}
\xoverline{\Lambda}_{j \pm 1/2}^{n,\mp} &:= s'(u_{j \pm 1/2}^{n,\mp}(\xoverline{\Lambda}_{j-1}^n,\xoverline{\Lambda}_j^n,\xoverline{\Lambda}_{j+1}^n)),
\end{align}
so that we can write \eqref{eq:schemeGHighOrder} applied to IPM as
\begin{subequations}\label{eq:schemeSecondOrderH}
\begin{align}
\bm{u}_j^{n+1} = \langle H_j^n(\xoverline{\Lambda}_{j}^n,\xoverline{\Lambda}_{j+1/2}^{n,-},\xoverline{\Lambda}_{j+1/2}^{n,+},\xoverline{\Lambda}_{j-1/2}^{n,-},\xoverline{\Lambda}_{j-1/2}^{n,+}) \bm{\varphi}\rangle,
\end{align}
where 
\begin{align}\label{eq:secondOrderH}
H_j^n(\xoverline{\Lambda}_{j}^n,&\xoverline{\Lambda}_{j+1/2}^{n,-},
  \xoverline{\Lambda}_{j+1/2}^{n,+},\xoverline{\Lambda}_{j-1/2}^{n,-},
  \xoverline{\Lambda}_{j-1/2}^{n,+})
 :=u_{ME}(\xoverline{\Lambda}_j^n+\Delta\Lambda_j^n)\\
  &- \frac{\Delta t}{\Delta x} \Big( g(u_{ME}(\xoverline{\Lambda}_{j+1/2}^{n,-}),u_{ME}(\xoverline{\Lambda}_{j+1/2}^{n,+}))  - g(u_{ME}(\xoverline{\Lambda}_{j-1/2}^{n,-}), u_{ME}(\xoverline{\Lambda}_{j-1/2}^{n,+})) \Big). \nonumber
\end{align}
\end{subequations}
After having derived this underlying scheme we can find a time-step restriction which ensures realizability.

\begin{theorem}
Assume that the entropy ansatz only takes values in $(u_-, u_+)$ and that $g$ gives a monotone scheme.
Then the time-updated moment vector $\bm{u}_j^{n + 1}$ from the second-order in space scheme \eqref{eq:schemeSecondOrderH} is realizable under the time-step restriction
\begin{align}\label{eq:CFLHigherOrder}
\widetilde{\gamma}\max_{u\in[u_-,u_+]} \vert f'(u)\vert \frac{\Delta t}{\Delta x}\leq \frac{1}{2}
\end{align}
where $\widetilde{\gamma}$ satisfies
\begin{align}\label{eq:gammaHigherOrder}
\max \left\{\frac{s''\left(\xoverline{\Lambda}_{j + 1/2}^{n,-}
  + \Delta \Lambda_{j + 1/2}^{n,-}\right)}
  {s''\left(\xoverline{\Lambda}_{j-1/2}^{n,+}\right)}, 
  \frac{s''\left(\xoverline{\Lambda}_{j - 1/2}^{n,+}
  + \Delta \Lambda_{j - 1/2}^{n,+}\right)}
  {s''\left(\xoverline{\Lambda}_{j+1/2}^{n,-}\right)} \right\} 
 \le \widetilde{\gamma}.
\end{align}
\end{theorem}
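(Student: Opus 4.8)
The plan is to reduce the realizability of $\bm{u}_j^{n+1}$ to the pointwise statement $H_j^n \in (u_-, u_+)$ for every $\xi$, exactly as in Proposition \ref{th:MinMax}: once the update function \eqref{eq:secondOrderH} takes values in $(u_-,u_+)$, its moments $\bm{u}_j^{n+1} = \langle H_j^n \bm\varphi\rangle$ are realizable by the definition of $\mathcal{R}$. The difficulty compared with the first-order case is that the five dual states entering \eqref{eq:secondOrderH} are not independent, since the inner edge states $\xoverline{\Lambda}_{j+1/2}^{n,-}$ and $\xoverline{\Lambda}_{j-1/2}^{n,+}$ are built from $\xoverline{\Lambda}_j^n$ and its neighbors through the minmod reconstruction. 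Hence I cannot apply the max-substitution of Proposition \ref{th:MinMax} to all five arguments simultaneously.

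Instead I would use a Zhang--Shu-type convex splitting of the MUSCL update. Using that the minmod reconstruction is conservative, i.e. $u_{ME}(\xoverline{\Lambda}_j^n) = \tfrac12\big(u_{ME}(\xoverline{\Lambda}_{j+1/2}^{n,-}) + u_{ME}(\xoverline{\Lambda}_{j-1/2}^{n,+})\big)$, and introducing the internal numerical flux $g(u_{ME}(\xoverline{\Lambda}_{j-1/2}^{n,+}),u_{ME}(\xoverline{\Lambda}_{j+1/2}^{n,-}))$, I would write $H_j^n = \tfrac12 A + \tfrac12 B$, where $A$ and $B$ each have the form of the first-order update $h$ in \eqref{eq:h} but run at the doubled ratio $2\Delta t/\Delta x$: $A$ is centered at the right inner edge and uses the right edge flux and the internal flux, while $B$ is centered at the left inner edge and uses the internal flux and the left edge flux. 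This doubling is exactly where the factor $\tfrac12$ in \eqref{eq:CFLHigherOrder} comes from. Each half-update then involves only three edge dual states. By the monotone-flux inequalities \eqref{eq:mono1stArg} and \eqref{eq:mono3rdArg} together with $u_{ME}' = 1/s'' \ge 0$, each half-update is monotone in its two outer arguments; monotonicity in the central (inner-edge) argument is the delicate point, treated next. Granting all three, replacing the arguments by their common maximum (resp.\ minimum) telescopes the two fluxes and leaves only the standalone cell-mean term $u_{ME}(\cdot)$, which lies in $(u_-,u_+)$ because $u_{ME}$ does, giving $A,B\in(u_-,u_+)$ and hence $H_j^n\in(u_-,u_+)$.

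The heart of the matter, as in the first-order Theorem \ref{th:CFLOrderOne}, is the monotonicity in the central argument, and this is where the optimization error reappears. Since the cell mean split off is the exact mean $u_{ME}(\hat\Lambda_j^n)$ while the reconstruction and fluxes are built from the approximate states $\xoverline{\Lambda}$, I would distribute the cell-level error $\Delta\Lambda_j^n$ onto the two inner edges, defining edge errors $\Delta\Lambda_{j\pm1/2}^{n,\mp}$ so that the standalone terms $u_{ME}(\xoverline{\Lambda}_{j+1/2}^{n,-}+\Delta\Lambda_{j+1/2}^{n,-})$ and $u_{ME}(\xoverline{\Lambda}_{j-1/2}^{n,+}+\Delta\Lambda_{j-1/2}^{n,+})$ still average to $u_{ME}(\hat\Lambda_j^n)$. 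Differentiating a half-update in its central argument then reproduces the structure of \eqref{eq:noMono}: the standalone term contributes a factor $1/s''$ evaluated at the error-corrected edge state, while the flux terms contribute $1/s''$ at an uncorrected edge state, so a ratio of entropy second derivatives of the form bounded in \eqref{eq:gammaHigherOrder} is produced. Bounding this ratio by $\widetilde{\gamma}$ and combining with the doubled CFL, the central-argument monotonicity condition becomes $2\,\widetilde{\gamma}\,\tfrac{\Delta t}{\Delta x}\max_{u\in[u_-,u_+]}|f'(u)|\le 1$, which is precisely \eqref{eq:CFLHigherOrder}.

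I expect the main obstacle to be exactly this bookkeeping of the optimization error through the split: one must verify that the cell error can be assigned to the inner edges so that each half-update is a legitimate monotone first-order scheme, and that the resulting second-derivative ratios are the ones appearing in \eqref{eq:gammaHigherOrder}, with the correct edge state in numerator and denominator. A secondary point requiring care is that the minmod property noted above — the reconstructed edge values are bounded by the neighboring cell means — is what guarantees that the inner edge states are themselves admissible reconstructions in $(u_-,u_+)$, so that the max/min substitution lands on a realizable state and no spurious extrema are introduced; for a slope without this property one would additionally need a bound-preserving limiting step.
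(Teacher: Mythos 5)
Your proposal is correct and follows essentially the same route as the paper: the convex splitting of the cell mean into the two inner-edge contributions, the insertion of the internal flux $g(u_{ME}(\xoverline{\Lambda}_{j-1/2}^{n,+}),u_{ME}(\xoverline{\Lambda}_{j+1/2}^{n,-}))$ to produce two first-order half-updates at the doubled ratio $2\Delta t/\Delta x$, and the transfer of the optimization error to the inner edges yielding the second-derivative ratios in \eqref{eq:gammaHigherOrder}. The bookkeeping step you flag as the main obstacle is resolved in the paper exactly as you anticipate, by defining $\hat\Lambda_{j\pm1/2}^{n,\mp}$ as the reconstruction built from the exact dual states and setting $\Delta\Lambda_{j\pm1/2}^{n,\mp}:=\hat\Lambda_{j\pm1/2}^{n,\mp}-\xoverline{\Lambda}_{j\pm1/2}^{n,\mp}$.
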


\begin{proof}
As in Proposition \ref{th:MinMax}, we show that $H_j^n(\xoverline{\Lambda}_{j}^n,\xoverline{\Lambda}_{j+1/2}^{n,-},\xoverline{\Lambda}_{j+1/2}^{n,+},\xoverline{\Lambda}_{j-1/2}^{n,-},\xoverline{\Lambda}_{j-1/2}^{n,+})$ increases monotonically in every argument.

We show monotonicity by adopting the technique of writing $H_j^n$ as a convex combination of evaluations of the first-order scheme of \eqref{eq:schemeInexact1}
\cite{liu1996nonoscillatory}.
We write
\begin{align}
u_{ME}(\xoverline{\Lambda}_j^n+\Delta\Lambda_j^n) =
u_{ME}(\hat{\Lambda}_j^n) = \frac12\left(u_{ME}(\hat \Lambda_{j-1/2}^{n,+})+u_{ME}(\hat \Lambda_{j+1/2}^{n,-}) \right),
\end{align}
where $\hat \Lambda_{j \pm 1/2}^{n,\mp}$ are defined as in \eqref{eq:edgeDualState} but with $(\xoverline{\Lambda}_{j-1}^n,\xoverline{\Lambda}_j^n,\xoverline{\Lambda}_{j+1}^n)$ replaced by $(\hat{\Lambda}_{j-1}^n,\hat{\Lambda}_j^n,\hat{\Lambda}_{j+1}^n)$,%
\footnote{
In words, $\hat \Lambda_{j \pm 1/2}^{n,\mp}$ are derived from the pointwise linear reconstruction using the values of the exact entropy ansatz instead of the approximate entropy ansatz returned by the numerical optimizer.
}
and insert this into \eqref{eq:secondOrderH}, so that after adding and subtracting
\begin{linenomath}\begin{align*}
\frac{\Delta t}{\Delta x} g\left(u_{ME}\left(\xoverline{\Lambda}_{j-1/2}^{n,+}\right), u_{ME}\left(\xoverline{\Lambda}_{j+1/2}^{n,-}\right)\right)
\end{align*}\end{linenomath}
we can write $H_j^n$ as
\begin{linenomath}\begin{align*}
&H_j^n(\xoverline{\Lambda}_{j}^n,\xoverline{\Lambda}_{j+1/2}^{n,-},\xoverline{\Lambda}_{j+1/2}^{n,+},\xoverline{\Lambda}_{j-1/2}^{n,-},\xoverline{\Lambda}_{j-1/2}^{n,+})\\
&\quad=\frac{1}{2}\left( H_{j,+}^n(\xoverline{\Lambda}_{j-1/2}^{n,+},\xoverline{\Lambda}_{j+1/2}^{n,-},\xoverline{\Lambda}_{j+1/2}^{n,+})
 + H_{j,-}^n(\xoverline{\Lambda}_{j-1/2}^{n,-},\xoverline{\Lambda}_{j-1/2}^{n,+},
 \xoverline{\Lambda}_{j+1/2}^{n,-})\right),
\end{align*}\end{linenomath}
where
\begin{linenomath}\begin{align*}
H_{j,\pm}^n(\Lambda_\ell, \Lambda_c, \Lambda_r) := &u_{ME}(\Lambda_c + \Delta\Lambda_{j\pm 1/2}^{\mp})\\
& - 2\frac{\Delta t}{\Delta x}\left( g(u_{ME}(\Lambda_c),u_{ME}(\Lambda_r))-g(u_{ME}(\Lambda_\ell),u_{ME}(\Lambda_c)) \right)
\end{align*}\end{linenomath}
and
\begin{align}
\Delta \Lambda_{j \pm 1/2}^{n,\mp} := \hat \Lambda_{j \pm 1/2}^{n,\mp}
 - \xoverline{\Lambda}_{j \pm 1/2}^{n,\mp}.
\end{align}
The functions $H_{j,\pm}^n$ are similar to the first-order update function \eqref{eq:HfirstOrder}, so that one readily recognizes that they are monotone in each argument under the conditions
\begin{align}
 2 \frac{s''\left(\xoverline{\Lambda}_{j \pm 1/2}^{n,\mp}
  + \Delta \Lambda_{j \pm 1/2}^{n,\mp}\right)}
  {s''\left(\xoverline{\Lambda}_{j \pm 1/2}^{n,\mp}\right)}
  \max_{u\in[u_-,u_+]} |f'(u)|
  \frac{\Delta t}{\Delta x}
 \leq 1,
\end{align}
respectively.

Thus $H_{j,\pm}^n$ are both monotone under \eqref{eq:CFLHigherOrder}, so is $H_j^n$, and the realizability of $\bm{u}_j^{n + 1}$ follows. \qed
\end{proof}

Unfortunately the bound \eqref{eq:gammaHigherOrder} requires the coupling of the stopping criterion of the optimization problems in neighboring cells.
This destroys parallelizability.

We avoid this by adopting the same strategy as in Section \ref{sec:replace-moments}:
that is, we replace $u_{ME}(\xoverline{\Lambda}_j^n + \Delta \Lambda_j^n)$ in \eqref{eq:secondOrderH} with $u_{ME}(\xoverline{\Lambda}_j^n$).
Thus we do not have to consider the optimization error when checking monotonicity, and we get monotonicity under the condition
\begin{align}
 \max_{u\in[u_-,u_+]} \vert f'(u)\vert \frac{\Delta t}{\Delta x}\leq \frac{1}{2}.
\end{align}
In order to maintain accuracy, we use the stopping criterion \eqref{eq:tauCrit} with $\tau = \mathcal{O}(\Delta x^3)$.

\subsection{Second-order time integration}

For time integration we use strong stability-preserving (SSP) methods.  These are the standard choice for hyperbolic equations and allow us to build on our analysis of forward Euler steps, since SSP methods can be written as convex combinations of forward Euler steps.
We rewrite a forward Euler step in the form
\begin{align}
 \bm{u}_j^{n + 1} = \bm{u}_j^n + \Delta t L_j(\xoverline{\Lambda}_{j - 2}^n,
  \xoverline{\Lambda}_{j - 1}^n,\xoverline{\Lambda}_j^n,
  \xoverline{\Lambda}_{j + 1}^n, \xoverline{\Lambda}_{j + 2}^n),
\end{align}
where
\begin{linenomath}\begin{align*}
L_j(\xoverline{\Lambda}_{j - 2}^n, \xoverline{\Lambda}_{j - 1}^n,
   \xoverline{\Lambda}_j^n, \xoverline{\Lambda}_{j + 1}^n,
   \xoverline{\Lambda}_{j + 2}^n)
  := - \frac{1}{\Delta x} \Bigg( & \left\langle
   g(u_{ME}(\xoverline{\Lambda}_{j+1/2}^-),
   u_{ME}(\xoverline{\Lambda}_{j+1/2}^+)) \bm{\varphi} \right\rangle \\
  &- \left\langle g(u_{ME}(\xoverline{\Lambda}_{j-1/2}^-),
  u_{ME}(\xoverline{\Lambda}_{j-1/2}^+))\bm{\varphi}\right\rangle\Bigg).
\end{align*}\end{linenomath}
In particular, we use multistep SSP methods \cite{shu1988total}.  With multistep methods, we are able to re-use the evaluations of $L_j$ from previous time steps, in contrast to single-step (i.e., multistage Runge--Kutta) methods, which require multiple evaluations of $L_j$ for each time step.
The time update for a general multistep SSP method has the form
\begin{linenomath}\begin{align*}
\bm{u}^{n+1}_j = \sum_{i=1}^m \alpha_i \bm{u}^{n+1-i}_j + \Delta t \beta_i L_j(\xoverline{\Lambda}_{j - 2}^n, \xoverline{\Lambda}_{j - 1}^n,
   \xoverline{\Lambda}_j^n, \xoverline{\Lambda}_{j + 1}^n,
   \xoverline{\Lambda}_{j + 2}^n),
\end{align*}\end{linenomath}
where $m$ is the number of past steps used to compute the $(n + 1)$-th time step.
When a forward Euler step remains realizable under time step $\Delta t_{\rm FE}$, then the multistep SSP method remains realizable under time step $c\Delta t_{\rm FE}$, where
\begin{linenomath}\begin{align*}
 c:= \min_{\{i : \beta_i > 0\}}\frac{\alpha_i}{\vert \beta_i \vert}.
\end{align*}\end{linenomath}
We use the four-step second-order method found in \cite{gottlieb2001strong}:
\begin{align}\label{eq:sspMultistep2}
&\bm{\alpha} = \left(\frac{8}{9},0,0,\frac{1}{9}\right)^T,\enskip \bm{\beta} = \left( \frac{4}{3},0,0,0\right)^T,\enskip c = \frac{2}{3}. 
\end{align}
With this multistep SSP method, the new CFL condition is given by
\begin{align}\label{eq:CFLboundPresSSP}
\max_{u\in[u_-,u_+]} \vert f'(u)\vert \frac{\Delta t}{\Delta x}\leq \frac{1}{3}.
\end{align}

\section{Choosing the Entropy}\label{sec:fd-entropy}

While the log-barrier does the job of enforcing bounds on the oscillations around $\min u_0$ and $\max u_0$, it is not the only choice which achieves such bounds.
If we look at the form of the entropy ansatz $u_{ME}(\Lambda) = (s')^{-1}(\Lambda)$ in \eqref{eq:ansatz}, we see that it is sufficient that the derivative $s'$ maps the open interval $(u_-, u_+)$ to the entire real line.
I.e., it suffices that
\begin{align}\label{eq:slopeInf}
 \lim_{u \nearrow u_+} s'(u) \rightarrow \infty
 \quad \text{and} \quad
 \lim_{u \searrow u_-} s'(u) \rightarrow -\infty
\end{align}
to achieve the desired bounds on the entropy ansatz.
We can use this to find a new entropy with better properties.

In choosing an entropy, our goals are to satisfy the original maximum principle as closely as possible and to obtain a solution which oscillates as little as possible.
The first step is ensured by condition \eqref{eq:slopeInf}, as long as we take $\Delta u = u_+ - \max u_0 = \min u_0 - u_-$ as small as possible.
In fact, ideally we would like to just choose $\Delta u = 0$.

The log-barrier entropy \eqref{eq:log-barrier} achieves condition \eqref{eq:slopeInf} indirectly: by ruling out values outside of $(u_-, u_+)$ using barriers in $s$ itself.
There exist, however, moment vectors for which the ansatz must take on the value $\max u_0$ or $\min u_0$ on sets of nonzero measure.
The moment vectors of the initial condition $\bm{u}(0, x) = \langle u_0(x, \cdot) \bm{\varphi} \rangle$ take on such values when, for example, $u_0$ attains its maximum or minimum (over all $x \in \mathcal{D}$ and $\xi \in \Theta$) at some point in space with certainty (i.e., constant in $\xi$).
These moments lie on the boundary of the set of realizability when $\Delta u = 0$,
but since the realizable set is open, here the optimization problem has no solution.
In the limit as a sequence of moment vectors approaches such a nonrealizable moment, the corresponding limit of entropy ans\"atze does converge, but the entropy value $\langle s(\mathcal{U}_{ME}(\bm{u}) \rangle$ goes to infinity.
In this sense, the log-barrier entropy does not always recover the certain case gracefully.

But we can fulfill condition \eqref{eq:slopeInf} without forcing $s$ itself to take infinite values.
An entropy which achieves this is
\begin{align}\label{eq:BBEntropy}
s(u) = (u-u_-)\ln(u-u_-)+(u_+-u)\ln(u_+-u).
\end{align}
With $u_- = 0$ and $u_+ = 1$, this is the entropy for particles with Fermi--Dirac statistics. In the following, this entropy is called bounded-barrier (BB) entropy. It satisfies condition \eqref{eq:slopeInf} but is finite on the interval $[u_-, u_+]$. We compare the two entropy functions in Figure \ref{fig:comparisonEntropies}.

When one also compares the entropy ans\"atze resulting from the log-barrier and BB entropies in Figure \ref{fig:comparisonApproximation}, an interesting difference sticks out:
Here the BB entropy not only gives a much better solution, but in contrast to the solution using the log-barrier entropy it is not oscillatory around the value $u = (u_+ + u_-)/2 =: u_M$.
Further consideration of the shapes of the entropy functions offers a possible explanation.
In Figure \ref{fig:comparisonEntropies} we notice that the log-barrier entropy is much flatter than the BB entropy around their minimum at $u = u_M$.
Thus the log-barrier entropy does not distinguish among these values very well, and as a result the oscillations in its entropy ansatz seen in Figure \ref{fig:comparisonApproximation} are allowed because they have only a small effect on the value of the entropy.
Correspondingly, values near the boundaries of the domain $u_-$ and $u_+$ are strongly punished by the log-barrier entropy; this is in contrast to the bounded-barrier entropy, which simply takes finite values even at the end points.

\begin{figure}
\centering
\begin{subfigure}{.5\textwidth}
  \centering
  \includegraphics[width=1.0\linewidth]{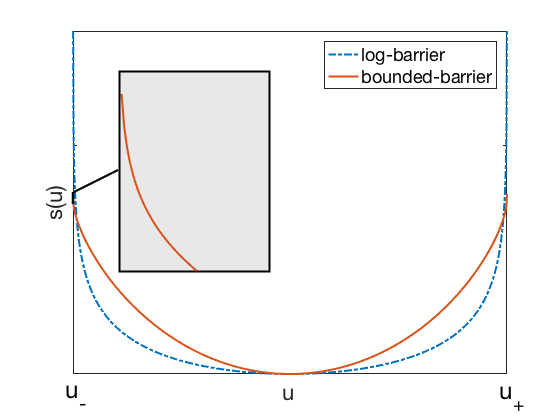}  
  \caption{}
  \label{fig:comparisonEntropies}
\end{subfigure}%
\begin{subfigure}{.5\textwidth}
  \centering
  \includegraphics[width=1.0\linewidth]{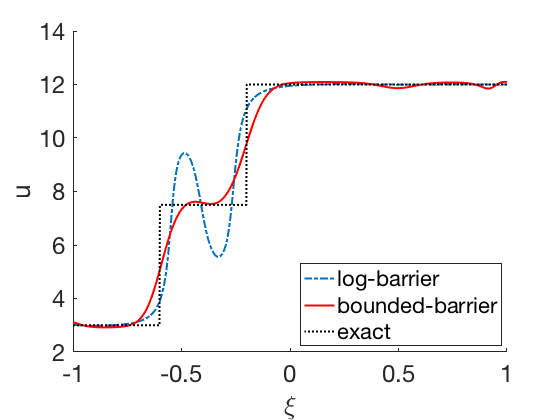}
    \caption{}
     \label{fig:comparisonApproximation}
\end{subfigure}
\caption{ (a) Comparison of entropies and (b) resulting approximation with $\Delta u = 0.1, N=10$.}
\label{fig:XiSpace}
\end{figure}
\begin{figure}[h!]
\centering
\begin{subfigure}{.5\textwidth}
  \centering
  \includegraphics[width=1.0\linewidth]{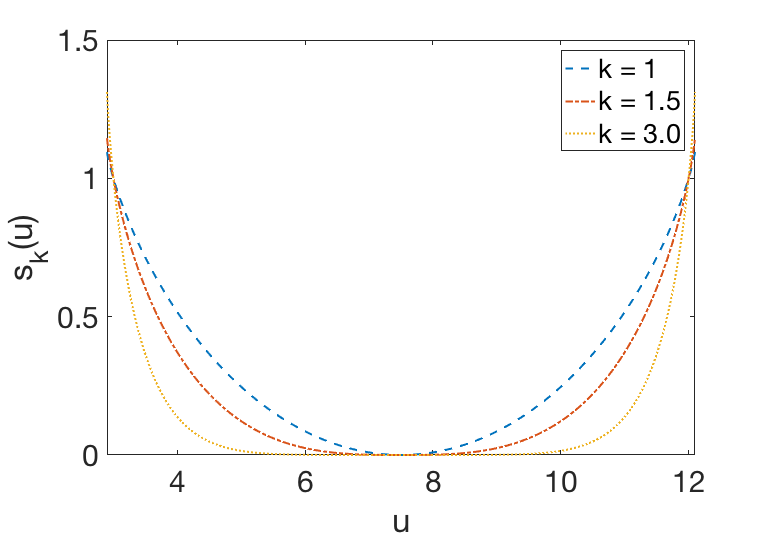}
  \caption{}
  \label{fig:Entropies}
\end{subfigure}%
\begin{subfigure}{.5\textwidth}
  \centering
  \includegraphics[width=1.0\linewidth]{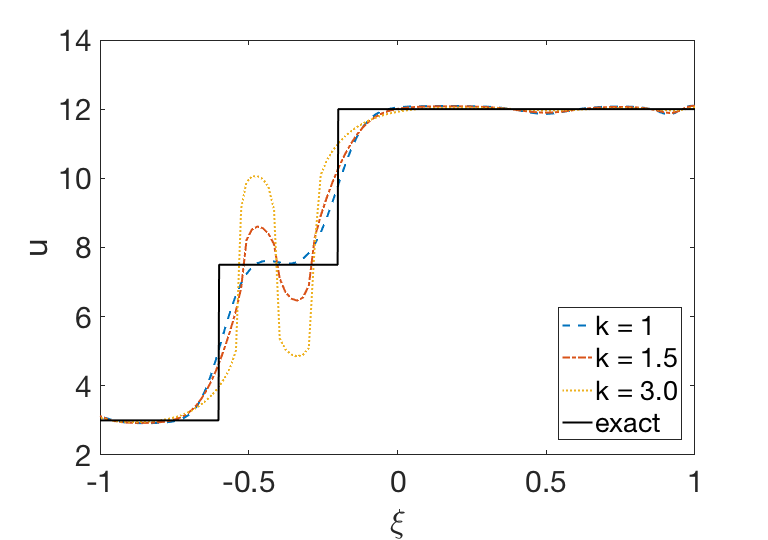}
  \caption{}
  \label{fig:Reconstructions}
\end{subfigure}
\caption{ (a) Family of entropies and (b) corresponding reconstruction for $\Delta u = 0.1, N=10$.}
\end{figure}
\begin{figure}[h!]
\centering
  \includegraphics[width=0.8\linewidth]{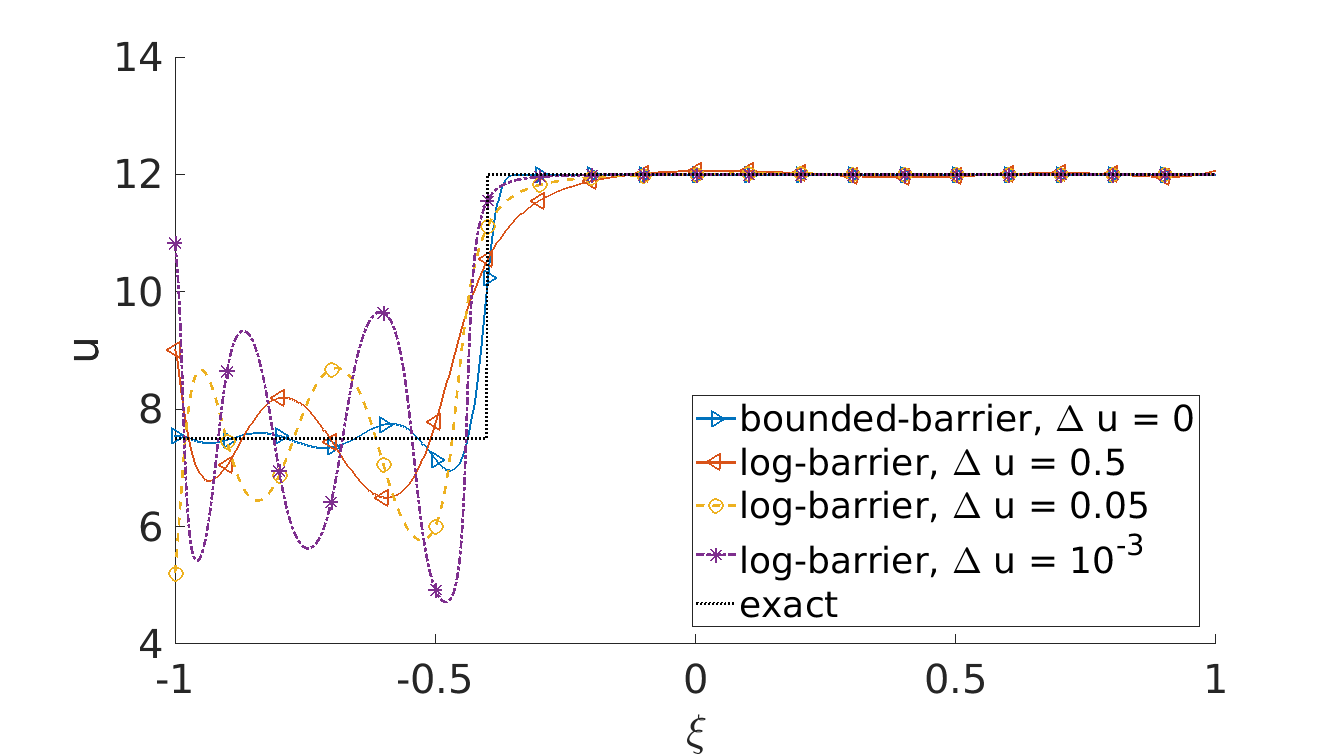}
  \caption{Approximation behavior for different values of $\Delta u$ with $N=10$.}
  \label{fig:logBarrierDeltaU}
\end{figure}

We tested this hypothesis by modifying the values of the slope around $u_M$ using the family of entropies
\begin{linenomath}\begin{align*}
s_{k}(u) = \left(\frac{s(u) - s\left(\frac{1}{2}(u_-+u_+)\right)}{s(u_{\max}) - s\left(\frac{1}{2}(u_-+u_+)\right)}\right)^k,
\end{align*}\end{linenomath}
where $s$ is the bounded-barrier entropy.
As we show in Figure \ref{fig:Entropies}, the higher $k$ is, the flatter the entropy is around $u_M$, so for higher values of $k$, we expect the entropy ansatz to be more oscillatory.
This is then exactly what we observe in Figure \ref{fig:Reconstructions}.

Another difference between the log- and bounded-barrier entropies is the dependence of the oscillations on the choice of $\Delta u$.
In numerical experiments, we noticed that with the log-barrier entropy, smaller values of $\Delta u$ are disadvantageous because the solutions are more oscillatory for smaller values of $\Delta u$.
We show an example of this behavior in Figure \ref{fig:logBarrierDeltaU}.
Here, we reconstruct a shock from $u_M$ to $u_{max}$.
The bounded-barrier entropy with $\Delta u = 0$ again gives the best result.
As we will see in the numerical results in the next section, the bounded-barrier entropy's more gentle behavior near the bounding values $u_-$ and $u_+$ allows us to choose $\Delta u = 0$ in all our numerical tests, thus exactly enforcing the original maximum principle.

\section{Numerical Results}
\label{sec:results}
In the following, we first compare the log-barrier and the bounded-barrier entropy in different test cases before turning to investigating the effectiveness of the two strategies to impose realizability. The exact solutions of all problems can be determined with the help of characteristics, see for example \cite[Chapter~3]{leveque1998nonlinear}. Furthermore, we use the upwind numerical flux in all test cases.

\subsection{Comparing different entropies}
\label{sec:UBurgers}
We start by comparing results when making use of the log- and bounded-barrier entropies. Following \cite{poette2009uncertainty}, we solve the uncertain Burgers' equation 
\begin{subequations}\label{eq:Burgers}
\begin{align}
\partial_t &u(t,x,\xi)+\partial_x \frac{u(t,x,\xi)^2}{2} = 0,\\
&u(t=0,x,\xi) = u_0(x,\xi),
\end{align}
\end{subequations}
with the first-order method in Algorithm \ref{alg:seqMod}.
As in \cite{poette2009uncertainty}, we choose the random initial condition
\begin{align}\label{eq:IC1}
u_0(x,\xi) &:= 
\begin{cases} u_L, & \mbox{if } x< x_0+\sigma\xi \\ u_L+\frac{u_R-u_L}{x_0-x_1} (x_0+\sigma \xi-x), & \mbox{if } x\in[x_0+\sigma \xi,x_1+\sigma \xi]\\
u_R, & \text{else }
\end{cases}
\end{align}
which is a forming shock with a linear connection from $x_0$ to $x_1$. In our case, $\xi$ is uniformly distributed on the interval $[-1,1]$. Due to the fact that we recalculate moments to ensure realizability, we can use the original CFL condition \eqref{eq:CFL}. We use the following parameter values:\\
\begin{center}
    \begin{tabular}{ | l | p{7cm} |}
    \hline
    $[a,b]=[0,3]$ & range of spatial domain \\
    $N_x=160$ & number of spatial cells \\
    $t_{end}=0.15$ & end time \\
    $x_0 = 0.5, x_1=1.5, u_L = 12, u_R = 3, \sigma = 0.2$ & parameters of initial condition \eqref{eq:IC1}\\
    $N+1 = 5$ & number of moments \\
    $\tau = 10^{-7}$ & gradient tolerance \eqref{eq:tauCrit} \\
    $\Delta u \in \{0, 0.001,0.5\}$ & distance $u_0$ to IPM bounds \\
    \hline
    \end{tabular}
\end{center}
Additionally, we computed all integrals in $\xi$ using a forty-point Gauss-Legendre quadrature.

\begin{figure}[h!]
\centering
  \includegraphics[width=0.8\linewidth]{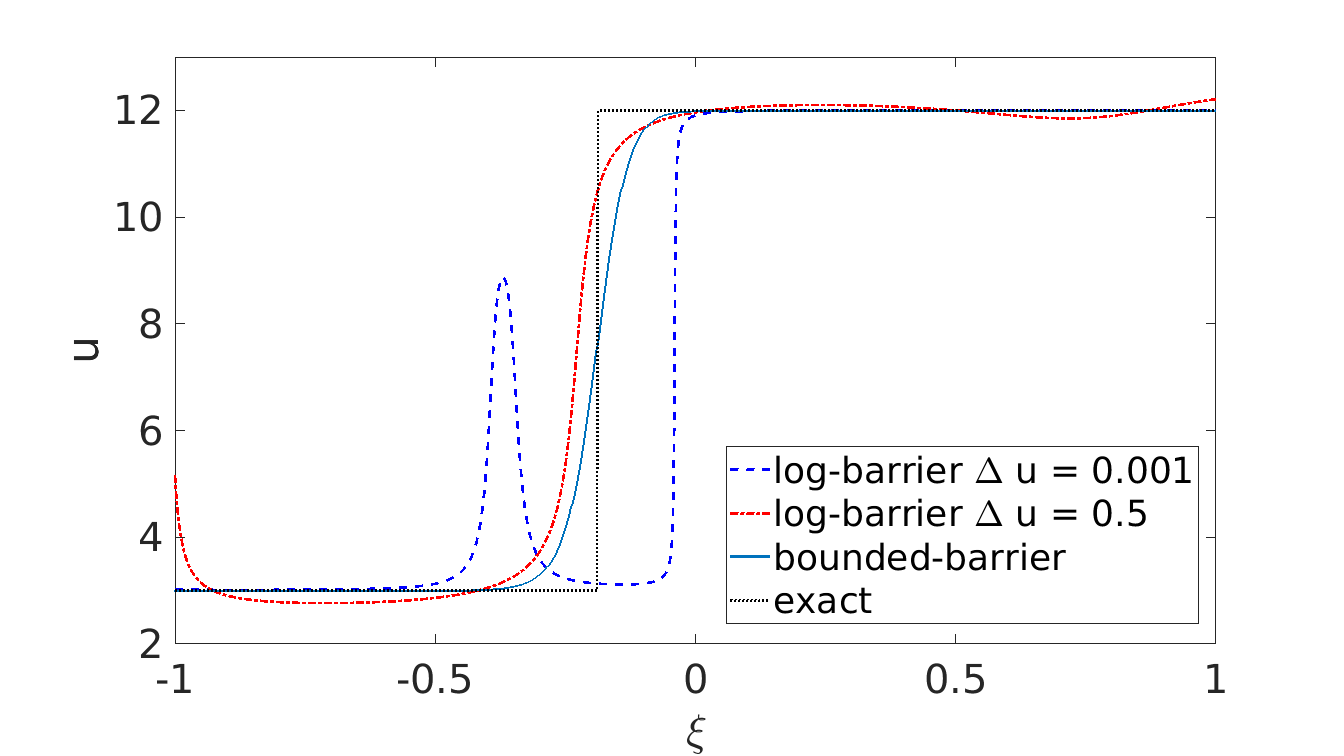}
  \caption{Solutions for log-barrier and bounded-barrier entropies at fixed spatial position $x$.}
  \label{fig:IC1fixedX}
\end{figure}
\begin{figure}[h!]
\centering
\begin{subfigure}{.5\textwidth}
  \centering
  \includegraphics[width=1.0\linewidth]{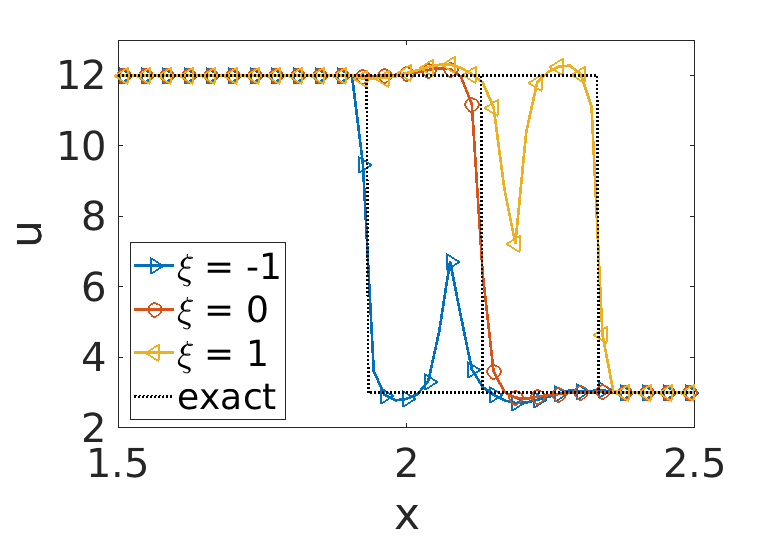}
  \caption{log-barrier entropy, $\Delta u = 0.5$.}
  \label{fig:u0Lambda}
\end{subfigure}%
\begin{subfigure}{.5\textwidth}
  \centering
  \includegraphics[width=1.0\linewidth]{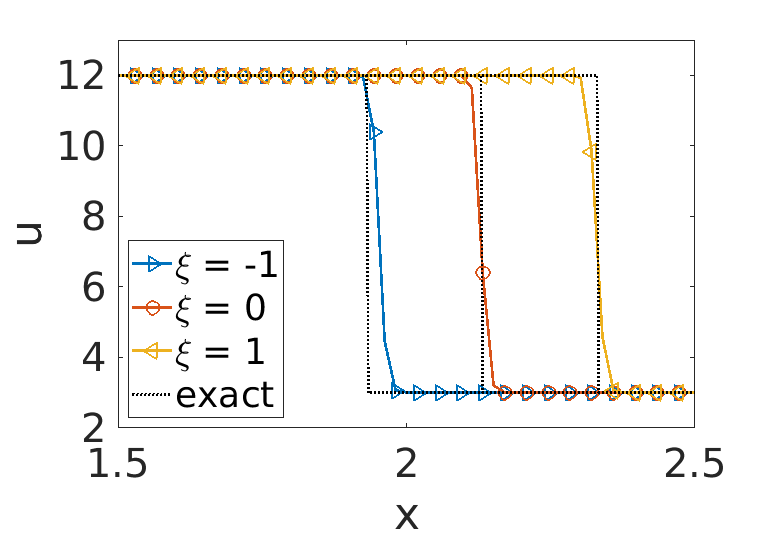}
  \caption{bounded-barrier entropy, $\Delta u = 0$.}
  \label{fig:XiSpaceT2}
\end{subfigure}
\caption{ Solution for different entropies evaluated at $\xi\in\{-1,0,1\}$. }
\label{fig:fixedXi}
\end{figure}
\begin{figure}[h!]
\centering
  \includegraphics[width=0.8\linewidth]{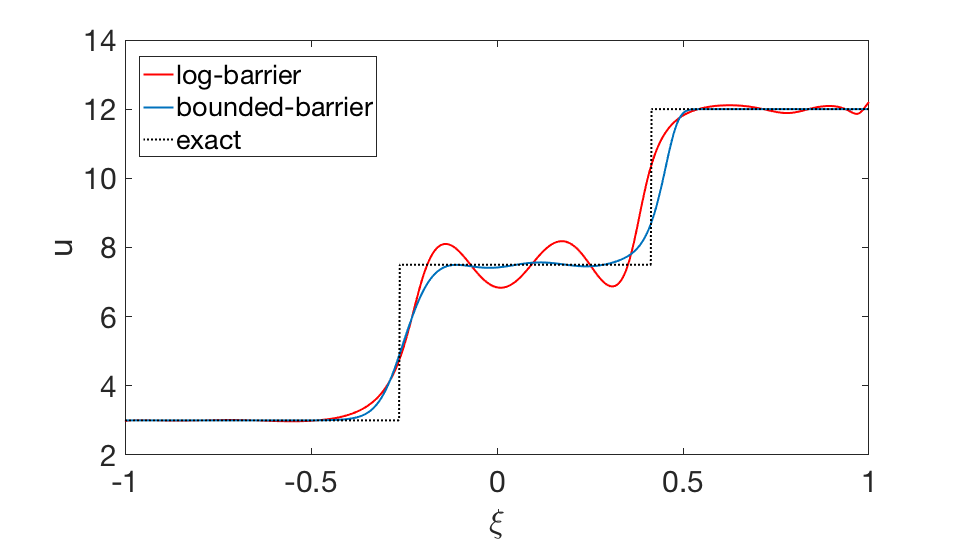}
  \caption{Solutions for log-barrier and bounded-barrier entropies at $x^* = 2.1$.}
  \label{fig:IC4fixedX}
\end{figure}
Since the log-barrier entropy is infinite at $u_+$ and $u_-$, we need to choose $\Delta u > 0$. We choose $\Delta u = 0.5$ as in \cite{poette2009uncertainty} as well as $\Delta u = 0.001$ to demonstrate the effects when the solutions lie close to the minimal and maximal value of the exact solution.
Note that the maximal velocity of the equation is $u_+ = u_L + \Delta u$, so consequently the CFL condition of the deterministic problem (where velocities are bounded by $u_L$) cannot be used.
The bounded-barrier entropy shows good approximation results for small values of $\Delta u$, so we set this parameter to zero, allowing the use of the deterministic CFL condition.
Plotting the solutions at fixed values for $\xi$ in Figure \ref{fig:fixedXi} shows the expected poor approximation behavior of the log-barrier entropy for small values of $\Delta u$. The choice $\Delta u = 0.5$ leads to over- and undershoots when using the log-barrier entropy, whereas the bounded-barrier entropy nicely approximates the solution.
Furthermore, the solution obtained with the bounded-barrier entropy fulfills the original maximum principle.
Looking at the dependency on $\xi$ for a fixed spatial cell in Figure \ref{fig:IC1fixedX}, one observes that the log-barrier entropy has oscillations whereas the bounded-barrier entropy gives a nonoscillatory solution.

Let us now turn to a new initial condition for the uncertain Burgers' equation in order to investigate the oscillations arising at a noncritical state $u_M$:
\begin{align}\label{eq:IC4}
u_0(x,\xi) &:= 
\begin{cases} u_L, & \mbox{if } x\leq x_0+\sigma\xi \\ u_L + (u_M-u_L)\cdot\frac{x_0+\sigma\xi-x}{x_0-x_1}, & \mbox{if } x\in( x_0+\sigma\xi, x_1+\sigma\xi]\\
u_M, & \mbox{if } x\in( x_1+\sigma\xi, x_2+\sigma\xi]\\
u_M + (u_R-u_M)\cdot\frac{x_3+\sigma\xi-x}{x_3-x_2}, & \mbox{if } x\in( x_2+\sigma\xi, x_3+\sigma\xi]\\
u_R, & \mbox{if } x > x_3+\sigma\xi,
\end{cases}
\end{align}
This initial condition describes two forming shocks that connect the three states $u_L$, $u_M$, and $u_R$.
All parameters which have been modified can be found in the following table:
\begin{center}
    \begin{tabular}{ | l | p{6.5cm} |}
    \hline
    $t_{end}=0.04$ & end time \\
    $x_0 = 0.8, x_1 = 0.98, x_2 = 1.32, x_3 = 1.5, \sigma = 0.5$ & parameters of initial condition \eqref{eq:IC4}\\
    $N+1 = 16$ & number of moments \\
    \hline
    \end{tabular}
\end{center}
The results for this problem can be seen in Figure \ref{fig:IC4fixedX}.
One observes that the solution using the log-barrier entropy is oscillatory, whereas with the bounded-barrier entropy the solution shows only small oscillations.
While the IPM scheme with the bounded-barrier entropy fulfills the maximum principle, the solution of the log-barrier entropy has over- and undershoots as large as $\Delta u$.

\subsection{Comparison of entropies in two-dimensional Random Space}
To compare both entropies in a two-dimensional random domain (i.e., $P = 2$), 
the initial condition of the Burgers' test case is changed to
\begin{align}\label{eq:IC3}
u_0(x) &:= 
\begin{cases} u_L+\sigma_0 \xi_0, & \mbox{if } x< x_0, \\ u_M+\sigma_1 \xi_1, & \mbox{if } x\in[x_0,x_1],\\
u_R, & \text{else,}
\end{cases}
\end{align}
where $\xi_0$ and $\xi_1$ are both uniformly distributed in $[-1,1]$.
This test case represents an uncertain multiple-shock flow, which is studied in 
compressible fluid mechanics, see \cite{poette2009uncertainty}. Realizability is 
again preserved by recalculating moments, meaning that the original CFL 
condition \eqref{eq:CFL} can be used. As in \cite{poette2009uncertainty}, a 
tensorized Clenshaw-Curtis quadrature rule of level 3 and an increased number of $N_x = 6000$ spatial grid points is used. In contrast to the other test cases, we need to choose a fine resolution of the spatial grid to minimize the effects of numerical diffusion, which significantly affects the solution in this test case.

\begin{figure}[h!]
\centering
	\begin{subfigure}{0.5\linewidth}
		\centering
		\includegraphics[scale=0.38]{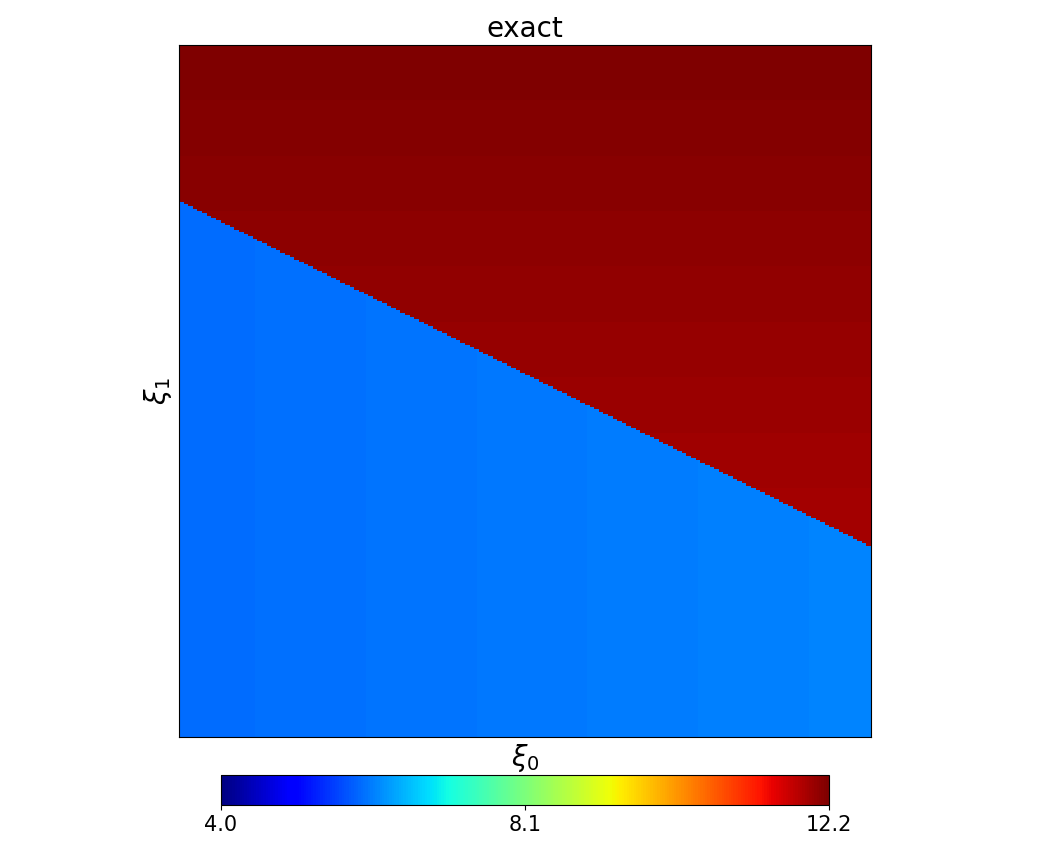}
		
		\label{fig:Burgers2DEx}
	\end{subfigure}%
	\begin{subfigure}{0.5\linewidth}
		\centering
		\includegraphics[scale=0.38]{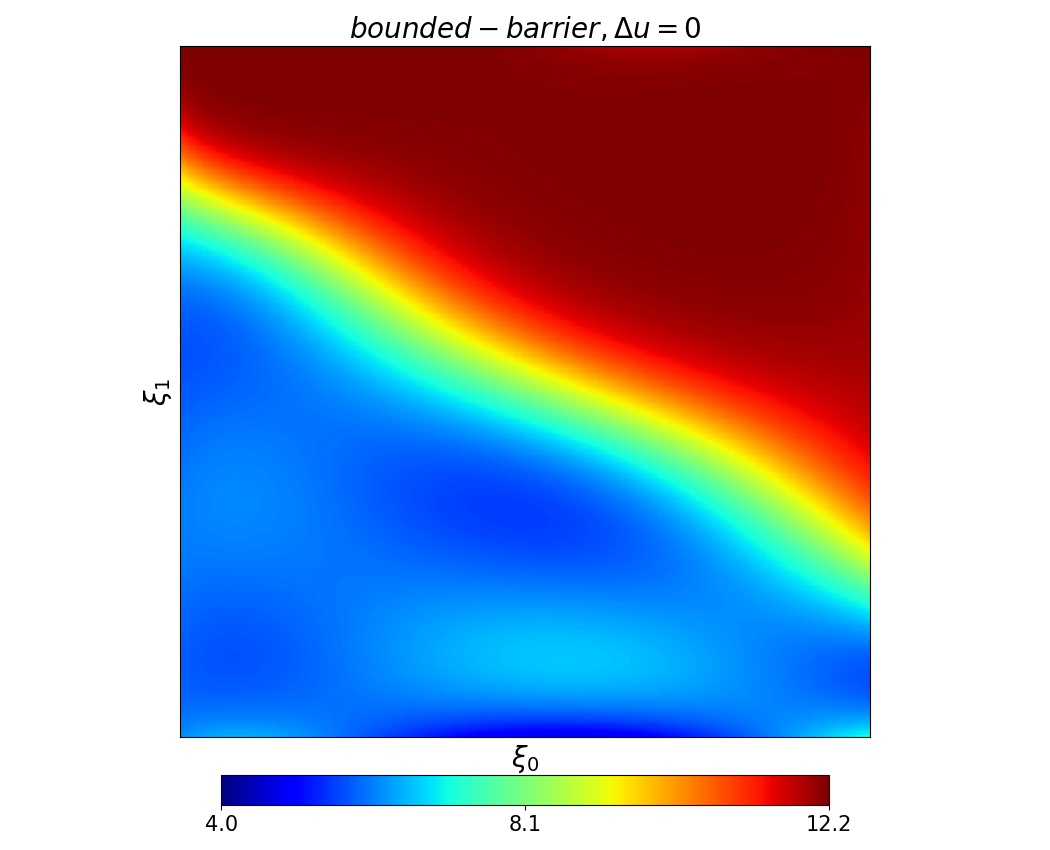}
		
		\label{fig:Burgers2DBB}
	\end{subfigure}
	\begin{subfigure}{0.5\linewidth}
		\centering
		\includegraphics[scale=0.38]{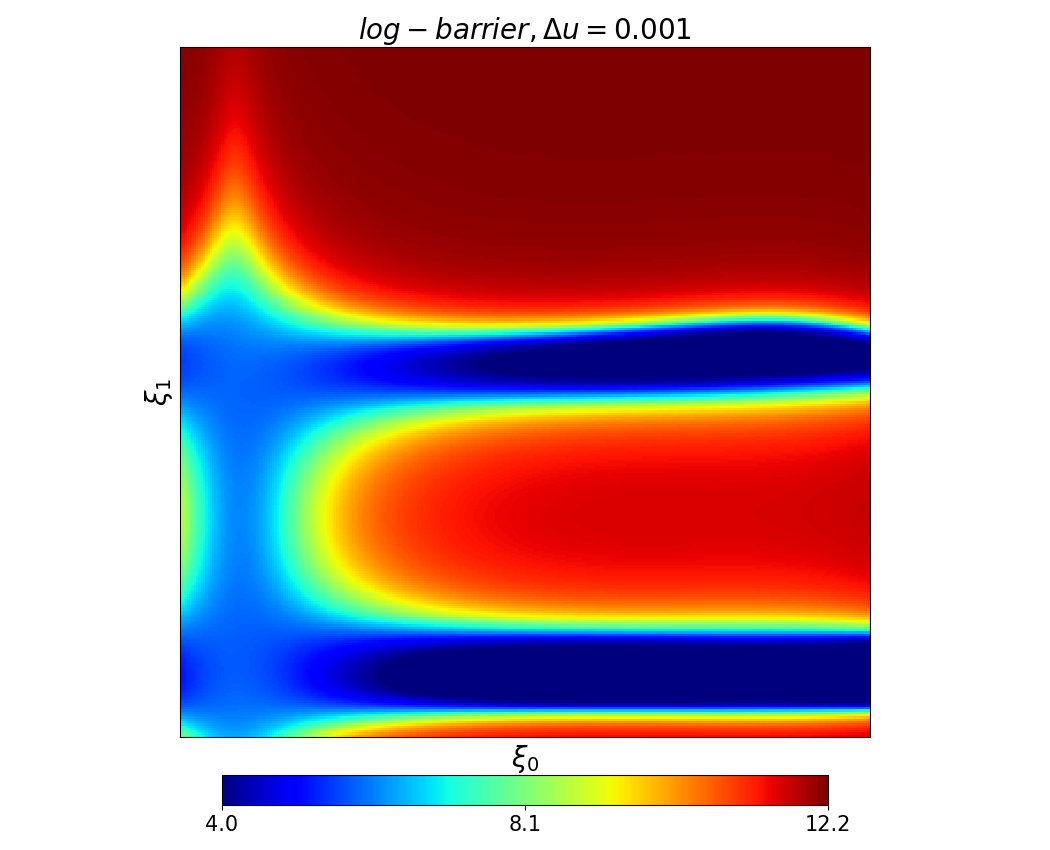}
		
		\label{fig:Burgers2DLBSmall}
	\end{subfigure}%
	\begin{subfigure}{0.5\linewidth}
		\centering
		\includegraphics[scale=0.38]{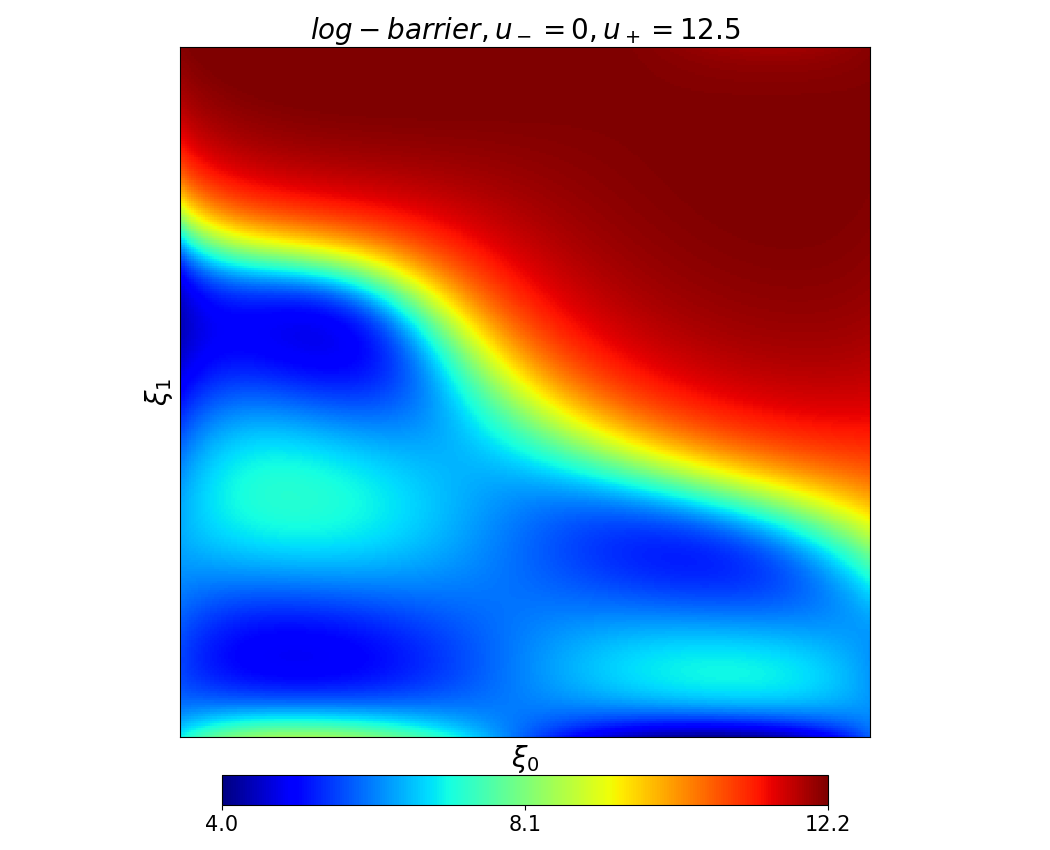}
		
		\label{fig:Burgers2DLB}
	\end{subfigure}
	\caption{Solution at $x^* = 0.4$ with different entropies}
	\label{fig:Burgers2D}
\end{figure}

\begin{center}
    \begin{tabular}{ | l | p{7cm} |}
    \hline
    $[a,b]=[0,1]$ & range of spatial domain \\
    $N_x=6000$ & number of spatial cells \\
    $t_{end}=0.01115$ & end time \\
    $x_0 = 0.3, x_1=1.6, \sigma_0 = 0.2,\sigma_1 = 0.2$, & parameters of initial condition \eqref{eq:IC3}\\
    $u_L = 12, u_M = 6, u_R = 1$ & \\
    $N+1 = 5$ & number of moments \\
    \hline
    \end{tabular}
\end{center}

The results are given in Figure~\ref{fig:Burgers2D}.
IPM again fulfills the maximum principle when the bounded-barrier entropy is 
used. The solution has only small oscillations around the intermediate state 
$u_M$ and shows good agreement with the exact solution. When trying to approach 
a maximum principle by choosing a small value of $\Delta u$ with the 
log-barrier entropy, the solution starts to oscillate heavily at the 
intermediate state. The solution resembles the one-dimensional result for a 
small value of $\Delta u$ depicted in Figure~\ref{fig:IC1fixedX}. Choosing the 
IPM bounds further away from the exact solution bounds (as in 
\cite{poette2009uncertainty}), we obtain a more accurate solution. However the 
maximum principle is not fulfilled since the solution takes on values bigger 
than $12.34$ (off the color scale) while showing oscillations at the 
intermediate state. This is also in agreement with the one-dimensional results 
shown before, where the maximum principle is violated by the log-barrier 
entropy.

\subsection{Convergence of different schemes}\label{sec:Convergence}
Due to its advantages compared to the log-barrier entropy, the following results have been obtained using the bounded-barrier entropy with $\Delta u = 0$. To investigate the convergence properties of the proposed first- and second-order schemes, we look at the advection equation with uncertain initial data
\begin{linenomath}\begin{align*}
\partial_t u(t,x,\xi)+\partial_x u(t,x,\xi) &= 0, \\
u(t=0,x,\xi) &= \sin(x+0.05\pi\xi),
\end{align*}\end{linenomath}
where $x\in[0,2]$ and $t_{end}=0.1$. We use periodic boundary conditions at the boundaries of the spatial domain. The number of moments we calculate is $3$.
We study the $L^1$ error of the expected value for different numbers of spatial discretization points.
Let $\bm{u}_h$ denote a numerical solution.
For first-order methods, it is constant across space in each spatial cell, and for second-order methods, it is defined according to the linear reconstructions given in Section \ref{sec:highOrder}.
Then we compute the $L^1$ error for each moment component by
\begin{linenomath}\begin{align*}
 \bm{e} := \int_0^2 |\bm{u}_h(t_{end}, x) - \bm{u}(t_{end}, x)| \,dx,
\end{align*}\end{linenomath}
where $\bm{u}(t_{end}, x)$ is the exact solution to the system of IPM moment equations \eqref{eq:sysMEU} at the final time $t_{end}$, and the absolute value and integral are taken component-wise.
In the following convergence results, we plot only the results for the zero-th component of $\bm{e}$.
The resulting convergence plot is given in Figure \ref{fig:convergence}.
\begin{figure}[h!]
\centering
\begin{subfigure}{.5\textwidth}
  \centering
  \includegraphics[width=1.0\linewidth]{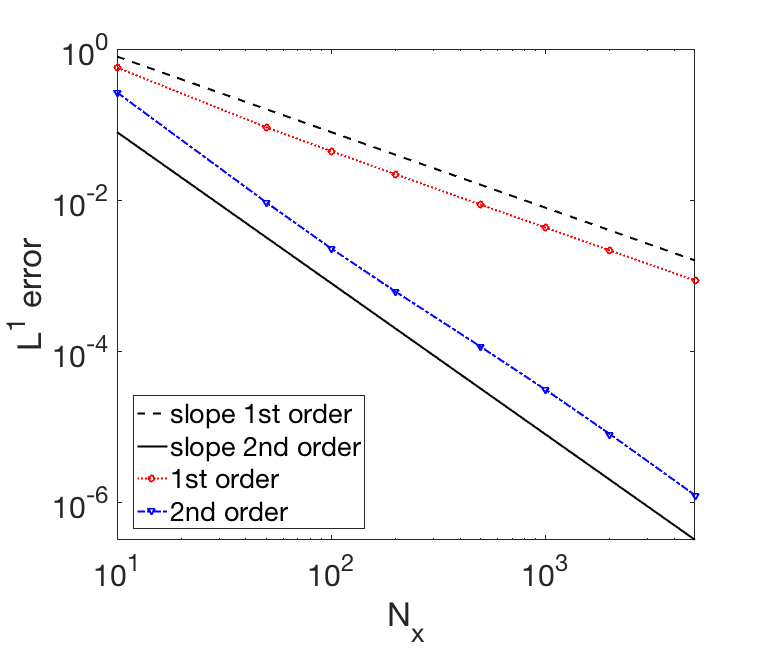}  
  \caption{}
  \label{fig:convergence}
\end{subfigure}%
\begin{subfigure}{.5\textwidth}
  \centering
  \includegraphics[width=1.0\linewidth]{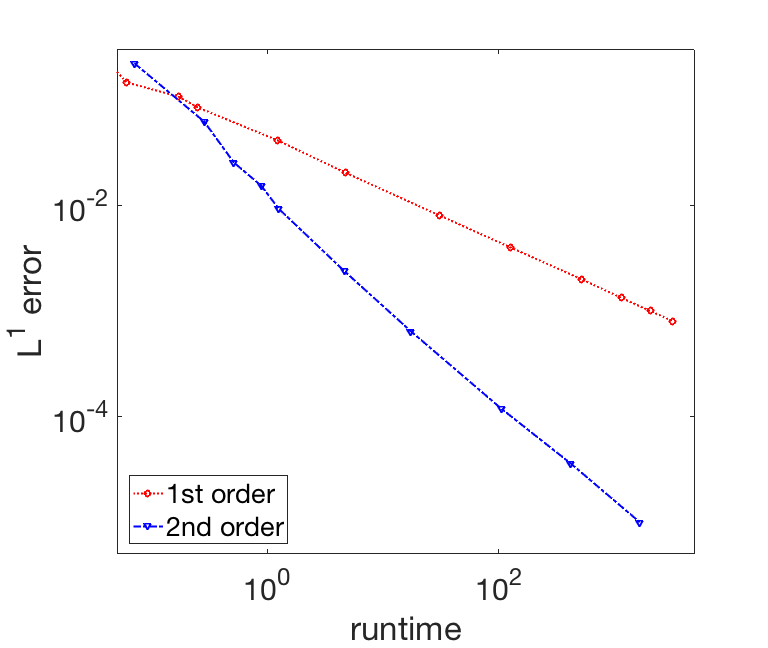}
    \caption{}
     \label{fig:Efficincy}
\end{subfigure}
\caption{ (a) Convergence of different IPM discretizations and (b) efficiency when using first- and second-order methods.}
\end{figure}
Both methods recalculate moments with the inaccurate dual states, meaning that in order to preserve the expected convergence rate $p\in\{1,2\}$, the stopping criterion of the optimization method needs to be set to $\tau = \Delta x^{p+1}$. For the time discretization of the second-order scheme, the four-step SSP scheme \eqref{eq:sspMultistep2} has been used.
Heun's method is used to calculate the first three time steps.
That the different schemes show the expected convergence.

The efficiency of the two methods shown in Figure \ref{fig:Efficincy} demonstrates that the second-order scheme reaches most levels of accuracy with less computing time than the first-order scheme.

\subsection{Comparison of strategies to preserve realizability}
\label{sec:CompareMethods}
Two strategies to ensure realizability have been presented in section \ref{sec:modifiedCFL} and section \ref{sec:replace-moments}, namely using a modified CFL condition or modifying moments. To compare these two strategies, we look at the uncertain advection equation given by
\begin{linenomath}\begin{align*}
\partial_t &u(t,x,\xi)+a(\xi) \partial_x u(t,x,\xi) = 0,\\
&u(t=0,x) = u_0(x).
\end{align*}\end{linenomath}
We choose $a(\xi):=11+\xi$, where $\xi$ is uniformly distributed on $[-1, 1]$, so the velocity is uniformly distributed in the interval $[10,12]$.
What is interesting about this equation is that the velocity of the system is not known, which means that our CFL condition adds artificial viscosity to smaller velocities, while high velocities are well resolved.
We use the deterministic initial condition
\begin{align}\label{eq:IC1Deterministic}
u_0(x) &:= 
\begin{cases} u_L, & \mbox{if } x< x_0, \\ u_L+\frac{u_R-u_L}{x_0-x_1} (x_0-x), & \mbox{if } x\in[x_0,x_1],\\
u_R, & \text{else.}
\end{cases}
\end{align}
Parameters of the calculation can be found in the following table:
\begin{center}
    \begin{tabular}{ | l | p{7cm} |}
    \hline
    $N_x=80$ & number of spatial cells \\
    $t_{end}=0.19$ & end time \\
    $x_0 = 0.5, x_1=0.55, u_L = 12, u_R = 3$ & parameters of initial condition \eqref{eq:IC1Deterministic}\\
    $N+1 = 10$ & number of moments \\
    $\gamma \in \{1.5,1.1,1+10^{-7}\}, \zeta = 5$ & CFL modification \\
    $\zeta = 5$ & safety factor in estimation of $\bm{\hat \lambda}$ \eqref{eq:lambda-hat-est} \\
    $\Delta u \in \{0,10^{-7}\}$ & distance $u_0$ to IPM bounds \\
    \hline
    \end{tabular}
\end{center}
When modifying moments, we perform the computation using a first-order scheme as well as with second-order spatial reconstructions using the minmod limiter.
Since the second-order time discretization adds artificial viscosity without improving the accuracy, we use the explicit Euler method.
We use $\Delta u = 10^{-7}$ so that we can achieve the stopping criterion \eqref{eq:CFLfirstOrder} on every quadrature point.
\begin{figure}[h!]
\centering
  \includegraphics[width=0.8\linewidth]{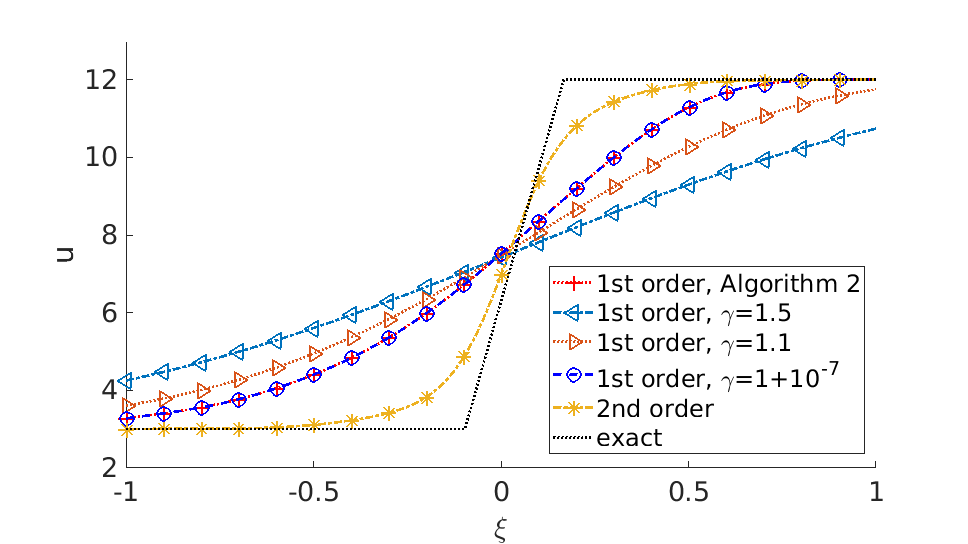}
  \caption{Solutions at $x^* = 2.6$ with and without using a spatial limiter.}
  \label{fig:AdvectionIC1SteepFixedX}
\end{figure}
\begin{figure}[h!]
\centering
  \includegraphics[width=0.8\linewidth]{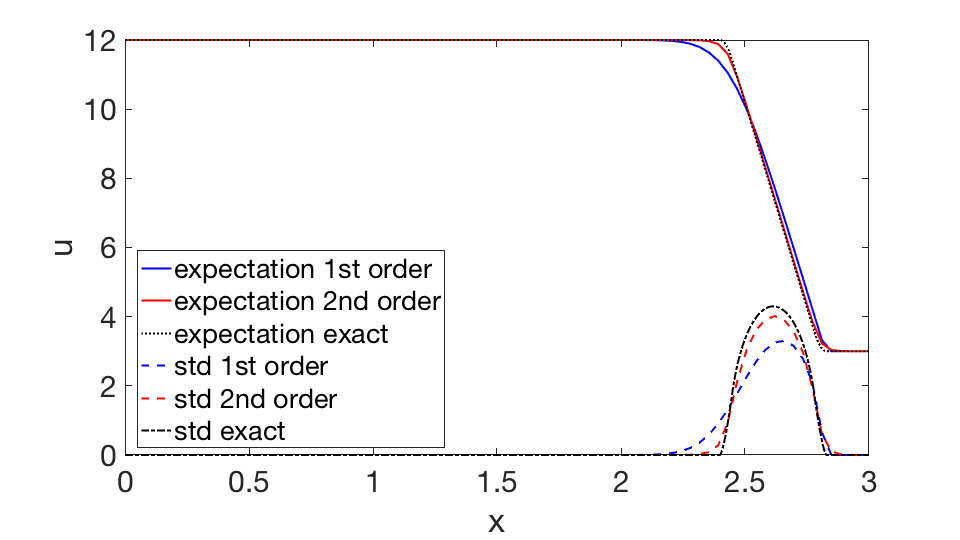}
  \caption{Expected value and standard deviation with and without limiters.}
  \label{fig:AdvectionIC1SteepExpectationVariance}
\end{figure}
Figure \ref{fig:AdvectionIC1SteepFixedX} shows the solution at a fixed position $x^*$.
Using Algorithm \ref{alg:seqMod} to ensure realizability allows the use of the deterministic CFL condition \eqref{eq:CFL}.
We compare this solution to those obtained with a modified CFL conditions according to section \ref{sec:modifiedCFL}.
As expected, modifying the CFL condition by $\gamma = 1.5$ leads to a heavily smeared-out solution.
However, we found that for this problem, it was possible to set $\gamma$ as small as $1 + 10^{-7}$.
The solution calculated with this value of $\gamma$ is essentially identical to the solution using Algorithm \ref{alg:seqMod}; they differ only on the order of $10^{-3}$ in the $L^\infty$ norm.
One can conclude that both realizability-preserving strategies for first-order methods are satisfactory.

Figure \ref{fig:AdvectionIC1SteepFixedX} also shows that the second-order spatial reconstructions give much better results.
This improvement is also seen in the expected value and standard deviation in Figure \ref{fig:AdvectionIC1SteepExpectationVariance}.
The standard deviation is particularly improved by going to second-order.

To underline the effects of artificial viscosity, we plot the solution when recalculating moments for first- and second-order spatial reconstructions for $\xi\in\{-1,0,1\}$.
Figure \ref{fig:AdvectionIC1SteepFixedXia} shows that the solution is well resolved if $\xi = 1$.
In the case of $\xi=-1$, the solution is smeared out, since the CFL condition does not allow the scheme to sharply capture shocks.
In Figure \ref{fig:AdvectionIC1SteepFixedXib}, we see that this effect is smaller when using second-order reconstructions.
\begin{figure}[h!]
\centering
\begin{subfigure}{.5\textwidth}
  \centering
  \includegraphics[width=1.0\linewidth]{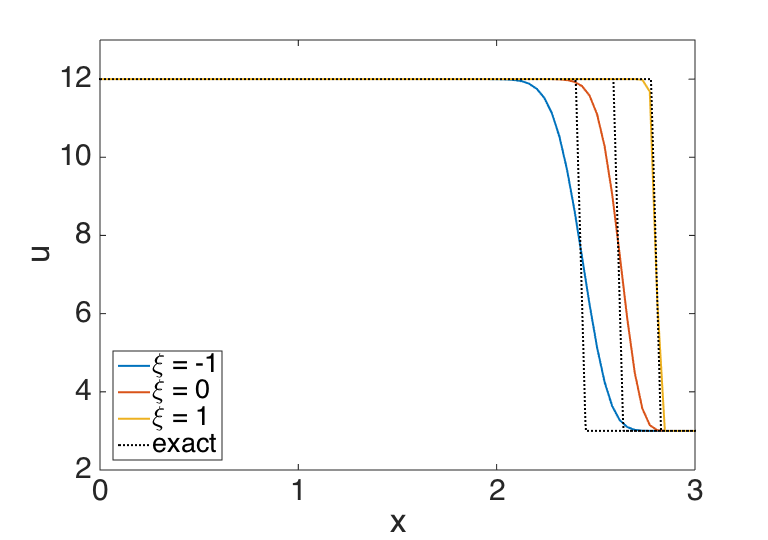}
  \caption{First order.}
  \label{fig:AdvectionIC1SteepFixedXia}
\end{subfigure}%
\begin{subfigure}{.5\textwidth}
  \centering
  \includegraphics[width=1.0\linewidth]{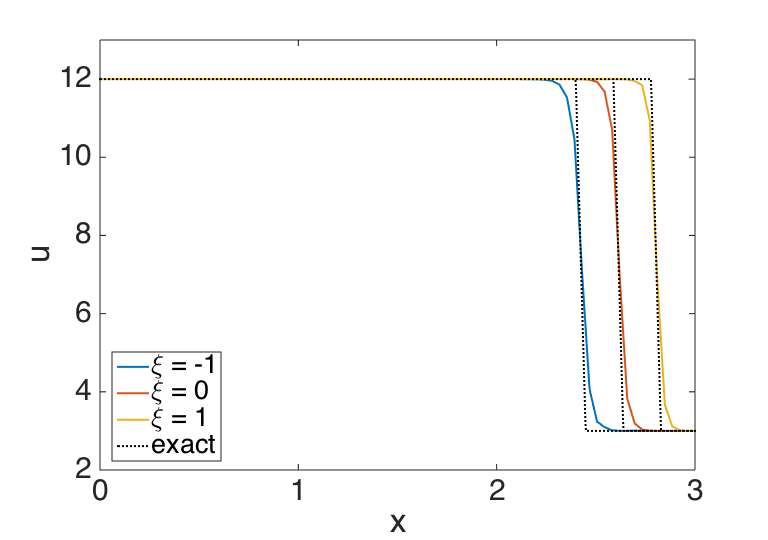}
  \caption{Second order.}
  \label{fig:AdvectionIC1SteepFixedXib}
\end{subfigure}
\caption{ Solution evaluated at $\xi\in\{-1,0,1\}$ with and without limiters.}
\label{fig:AdvectionIC1SteepFixedXi}
\end{figure}

\section{Conclusion and Outlook}\label{sec:Section8}
In this paper, we have investigated robust implementations of the IPM method for uncertain scalar hyperbolic conservation laws.
The standard discretization of the IPM moment system can easily lead to nonrealizable moments and these nonrealizable moments cause the numerical solver to crash because in these cases the ansatz is undefined.
This is especially true when the IPM bounds $u_-$ and $u_+$ are chosen very close to the bounds of the true solution.
In order to construct a second-order discretization of the IPM scheme that prevents such realizability problems, we investigated the numerical scheme in terms of the monotonicity of the underlying scheme for the original PDE.
We derived two first-order schemes which preserve realizability:
The first scheme makes use of a modified CFL condition and the second scheme recalculates moments from the inexact dual state.
We also extended this second scheme to second order.

We also investigated the approximation properties of the IPM scheme using different entropies.
By considering the entropy ansatz directly, we showed that the solution is not bounded due to properties of the entropy density $s$ itself but rather its derivative $s'$.
This allowed us to construct an entropy, which we called the bounded-barrier entropy, that takes finite values at the bounds $u_-$ and $u_+$
The bounded-barrier entropy behaves more gracefully near the boundary values $u_-$ and $u_+$, which we showed also leads to better solutions at intermediate values.
This allowed us to take the IPM bounds to be the minimal and maximal value of the true solution, thus allowing the method to fulfill the exact maximum principle of the underlying PDE.

We applied our numerical schemes to the uncertain Burgers' equation as well as the uncertain advection equation.
We observed that in contrast to solutions using the log-barrier entropy the solutions calculated using the bounded-barrier entropy fulfill the maximum principle and are nonoscillatory, particularly at intermediate states.

We consider the IPM method a promising tool to treat uncertain hyperbolic equations which is a clear improvement over the stochastic-Galerkin method.
In order to compete with the faster computation times of the stochastic-Galerkin method one should focus on accelerating the process of solving the dual problem, taking advantage of parallelizability, as well as higher-order schemes.
An extension to higher-order schemes should be straightforward with bound-preserving limiters \cite{liu1996nonoscillatory,zhang2010positivity}.

\section*{Acknowledgment} \noindent
This work was supported by the German Research Foundation (DFG).
Jonas Kusch and Martin Frank were supported under grant FR 2841/6-1 and
Graham Alldredge under AL 2030/1-1.

\bibliographystyle{siamplain}

\bibliography{IPMPaper}
\end{document}